\newtheorem{theorem}{Theorem}
\newtheorem{acknowledgement}[theorem]{Acknowledgement}
\newtheorem{corollary}[theorem]{Corollary}
\newtheorem{lemma}[theorem]{Lemma}
\newtheorem{proposition}[theorem]{Proposition}
\begin{document}
\title{On unit stable range matrices}
\author{Grigore C\u{a}lug\u{a}reanu}
\thanks{Keywords: unit stable range one, elementary divisor ring, ID ring, $%
2\times 2$ matrix. MSC 2010 Classification: 16U99, 16U10, 16U60, 15B33, 15B36%
}
\address{Babe\c{s}-Bolyai University, Cluj-Napoca, Romania}
\email{calu@math.ubbcluj.ro}

\begin{abstract}
We characterize the unit stable range one $2\times 2$ and $3\times 3$
matrices over commutative rings. In particular, we characterize the $2\times
2$ matrices which satisfy the Goodearl-Menal condition. For $2\times 2$
integral matrices we show that the stable range one and the unit stable
range one properties are equivalent, and, that the only matrix which
satisfies the Goodearl-Menal condition is the zero matrix.
\end{abstract}

\maketitle

\section{Introduction}

The unit stable range one for elements in a unital ring was introduced in 
\cite{goo} and further studied in \cite{chen}.

\textbf{Definition}. An element $a$ in a ring $R$ is said to have \textsl{%
(left) stable range one} (\textsl{sr1}, for short) if whenever $Ra+Rb=R$
(for any $b\in R$) there exists $r\in R$ such that $a+rb$ is a unit. If $r$
can be chosen to be a unit, we say that $a$ has \textsl{(left) unit stable
range one} (\textsl{usr1}, for short). Right (unit) stable range one is
defined symmetrically.

Equivalently, $a$ has \emph{left unit sr1} if for every $x,b\in R$ with $%
xa+b=1$, there is a unit $u\in R$, called \textsl{unitizer} (as in \cite%
{ca-po}), such that $a+ub$ is a unit. Equivalently, for every $x\in R$,
there is a unit $u$ such that $a+u(xa-1)$ is a unit.

By left multiplication with $u^{-1}$ (and change of notation), notice that $%
a $ has \emph{left unit sr1} if and only if for every $x\in R$, there is a
unit $u$ such that 
\begin{equation*}
(u+x)a-1
\end{equation*}%
is a unit.

Recall (well-known as the "\textsl{Jacobson's Lemma}") that for any unital
ring $R$ and elements $\alpha ,\beta \in R$, $1+\alpha \beta $ is a unit if
and only if $1+\beta \alpha $ is a unit. Using the last equivalent
definition, it follows that \emph{the unit stable range one (for elements)
is a left-right symmetric property}. Therefore, in the sequel we chose to
discuss only about \emph{left unit sr1 elements}, removing the "left"
attribute. We also use $usr(a)=1$ to indicate that $a$ has unit sr1. Notice
that \emph{zero} has trivially unit sr1.

Also recall the following

\textbf{Definition}. A ring $R$ is said to satisfy \textsl{the
Goodearl-Menal condition} (GM, for short; see \cite{goo}) provided that for
any $x,y\in R$ there exists a unit $u$ such that both $x-u$, $y-u^{-1}$ are
units.

In this paper, we specialize this to elements of rings, as follows: an
element $a\in R$ \textsl{satisfies the GM condition\ }if for every $x\in R$,
there exists a unit $u$ such that both $x-u$, $a-u^{-1}$ are units. Notice
that $a-u^{-1}$ is a unit if and only if $ua-1$ is a unit, which is a
special case of the unit sr1 definition, for $x=0$.

In Section 2, we give simple properties of elements (and rings) which share
these two properties. In section 3, a characterization for unit sr1 $2\times
2$ and $3\times 3$ matrices over any commutative ring is given, and we show
that unit stable range one and stable range one are equivalent properties
for $2\times 2$ integral matrices. Some special cases, including
idempotents, nilpotents and matrices with zero second row are also discussed.

The last section is dedicated to the Goodearl-Menal condition. A
characterization of the $2\times 2$ matrices over commutative rings which
satisfy the GM condition is given, and it is shown that the only integral $%
2\times 2$ matrix which satisfies the GM condition, is the zero matrix.

All rings we consider are (associative and) unital. For any unital ring $R$, 
$U(R)$ denotes the set of all the units, $N(R)$ denotes the set of all the
nilpotents, $J(R)$ denotes the Jacobson radical of $R$ and for any positive
integer $n\geq 2$, $\mathbb{M}_{n}(R)$ denotes the ring of all the $n\times
n $ matrices over $R$. By $E_{ij}$ we denote a square matrix having all
entries zero, excepting the $(i,j)$ entry, which is $1$.

Whenever it is more convenient, we will use the widely accepted shorthand
\textquotedblleft iff\textquotedblright\ for \textquotedblleft if and only
if\textquotedblright\ in the text.

\section{Prerequisites}

In this section, $R$ denotes an arbitrary (associative and unital) ring. The
following result can be adapted from \cite{goo}.

\begin{lemma}
If $a$ satisfies the GM condition then $a$ has unit sr1.
\end{lemma}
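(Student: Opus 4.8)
The plan is to work from the last equivalent reformulation of unit sr1 recorded in the introduction: $a$ has unit sr1 if and only if for every $x\in R$ there is a unit $v$ such that $(v+x)a-1$ is a unit. So I would fix an arbitrary $x\in R$ and aim to produce such a unitizer $v$ out of the data supplied by the GM condition.

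First I would feed this same $x$ into the GM condition for $a$, obtaining a unit $u$ for which both $x-u$ and $a-u^{-1}$ are units. The second of these I would convert into the shape demanded by the reformulation: multiplying the unit $a-u^{-1}$ on the left by the unit $u$ gives $u(a-u^{-1})=ua-1$, again a unit. This is exactly the observation recorded after the definition of the GM condition, that $a-u^{-1}$ is a unit iff $ua-1$ is a unit.

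Next I would set $v:=u-x$. Here the first half of the GM output enters: since $x-u$ is a unit, so is its negative $v=u-x$, hence $v$ is a genuine unit and a legitimate unitizer. Substituting, $(v+x)a-1=ua-1$, which the previous paragraph shows to be a unit. Thus the reformulated unit sr1 condition holds for this $x$, and since $x$ was arbitrary, $a$ has unit sr1.

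The computation is short, and I do not anticipate a real obstacle beyond careful bookkeeping; the one point deserving attention is that \emph{both} halves of the GM condition are used. The clause ``$a-u^{-1}$ is a unit'' alone yields only the $x=0$ instance of unit sr1 (take $v=u$); to reach an arbitrary $x$ one must shift the unitizer to $v=u-x$, and it is precisely the companion clause ``$x-u$ is a unit'' that guarantees this shifted element remains a unit.
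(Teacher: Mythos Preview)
Your argument is correct and essentially identical to the paper's: it too sets the unitizer to $-(x-u)=u-x$ and verifies $(v+x)a-1=ua-1=u(a-u^{-1})$ is a unit, using both halves of the GM hypothesis exactly as you describe.
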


\begin{proof}
If for every $x\in R$ there is a unit $u\in U(R)$ such that both $x-u$, $%
a-u^{-1}$ are units, then $(-(x-u)+x)a-1=ua-1=u(a-u^{-1})\in U(R)$, as
desired.
\end{proof}

Further, recall from \cite{vam} that an element $a$ in a ring $R$ is called 
\textsl{2-good} provided it is a sum of two units. A ring $R$ is \textsl{%
2-good} if so are all its elements.

Half of the condition GM is satisfied by any unit sr1 element.

\begin{lemma}
\label{doi-g}(i) If $usr(a)=1$, there exists a unit $u$ such that $%
a-u^{-1}\in U(R)$.

(ii) If $usr(a)=1$ then $a$ is 2-good.
\end{lemma}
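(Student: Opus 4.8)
The plan is to obtain part (i) directly from the third equivalent description of unit sr1 recorded in the introduction, namely that $usr(a)=1$ if and only if for every $x\in R$ there is a unit $u$ with $(u+x)a-1\in U(R)$. First I would specialize this to the single input $x=0$, which produces a unit $u$ such that $ua-1\in U(R)$. Then I would left-multiply by the unit $u^{-1}$: since $u^{-1}(ua-1)=a-u^{-1}$ and a product of two units is again a unit, it follows that $a-u^{-1}\in U(R)$, which is exactly what (i) asserts. Equivalently, one can invoke the observation made just after the GM definition that $a-u^{-1}$ is a unit precisely when $ua-1$ is a unit, and again feed it the instance $x=0$.

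For part (ii) I would simply reuse the unit $u$ produced in (i). Writing $a=(a-u^{-1})+u^{-1}$ exhibits $a$ as a sum of the two units $a-u^{-1}$ (a unit by part (i)) and $u^{-1}$, so $a$ is $2$-good by definition. No further computation is needed, since both summands are manifestly invertible.

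There is essentially no obstacle here: the argument is the computation from the opening lemma (where $ua-1=u(a-u^{-1})$ was used to pass from GM to unit sr1) read in the opposite direction, and fed the trivial input $x=0$. The only point demanding a little care is to invoke the correct one of the three equivalent formulations of unit sr1, and to note that the quantifier ``for every $x$'' in that formulation has already been arranged so that specializing to $x=0$ is legitimate and immediately yields the desired unit. The passage from (i) to (ii) is then a one-line rewriting of $a$ as a sum of two units.
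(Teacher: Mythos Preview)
Your proof is correct and follows essentially the same route as the paper: specialize the equivalent definition $(u+x)a-1\in U(R)$ at $x=0$ to get $ua-1\in U(R)$, then multiply by $u^{-1}$ to obtain $a-u^{-1}\in U(R)$; part (ii) is then the immediate decomposition $a=(a-u^{-1})+u^{-1}$.
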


\begin{proof}
(i) Since for every $x\in R$ there is a unit $u\in U(R)$ such that $%
(u+x)a-1\in U(R)$, we just take $x=0$. Hence $ua-1\in U(R)$ and so $%
a-u^{-1}=u^{-1}(ua-1)\in U(R)$.

(ii) Follows from (i), since $a-u^{-1}\in U(R)$ for a unit $u$ iff $a\in
U(R)+U(R)$.
\end{proof}

Therefore we have the following implications%
\begin{equation*}
a\mbox{ satisfies }GM\Rightarrow usr(a)=1\Rightarrow a\in U(R)+U(R).
\end{equation*}%
Examples in $\mathbb{M}_{2}(\mathbb{Z})$ show that these implications are
irreversible: having zero determinant, $E_{11}$ has sr1 (see \cite{ca-po})
but does not satisfy the GM condition (see last section, final step of the
proof of Theorem \ref{zero}), and, $2I_{2}=I_{2}+I_{2}$ is clearly 2-good,
but it has not even sr1 (see \cite{ca-po}: the matrix $nI_{2}$ has sr1 iff $%
n\in \{-1,0,1\}$).

\bigskip

Selecting $r=0$ in the definition of sr1 elements (see Introduction, first
definition) shows that \emph{units have sr1}. However, since $0$ is not a
unit, we cannot use it in order to prove that \emph{units have unit sr1}
(which actually fails: the units $\pm 1$ have not unit sr1 in $\mathbb{Z}$).
Moreover

\begin{lemma}
\label{1}In a ring $R$, $usr(1)=1$ iff $R$ is 2-good.
\end{lemma}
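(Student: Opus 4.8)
The plan is to unwind the definition of unit sr1 for the specific element $a=1$ and then recognize the resulting condition as exactly 2-goodness, after a harmless translation by $1$. First I would use the last equivalent formulation of unit sr1 recorded in the Introduction: $usr(a)=1$ iff for every $x\in R$ there is a unit $u$ such that $(u+x)a-1\in U(R)$. Specializing to $a=1$, this says precisely that for every $x\in R$ there is a unit $u$ with $u+x-1\in U(R)$.

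Next I would rewrite this condition as a statement about sums of two units. Saying that $u+x-1$ is a unit for some unit $u$ is the same as saying that $x-1$ equals a difference $v-u$ of two units (set $v=u+x-1$), hence a sum $v+(-u)$ of two units, since $-u$ is a unit whenever $u$ is. Conversely, any expression $x-1=s+t$ with $s,t\in U(R)$ yields the unit $u=-t$ for which $u+x-1=s\in U(R)$. Thus, for a fixed $x$, the assertion ``there is a unit $u$ with $u+x-1\in U(R)$'' is equivalent to ``$x-1\in U(R)+U(R)$''.

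Finally I would invoke the elementary fact that the translation $x\mapsto x-1$ is a bijection of $R$: as $x$ ranges over all of $R$, so does $x-1$. Combining this with the two previous steps, $usr(1)=1$ holds iff every element of the form $x-1$ is a sum of two units, iff every element of $R$ is a sum of two units, which is exactly the definition of $R$ being 2-good.

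I do not expect a genuine obstacle here. The entire content is the observation that for $a=1$ the defining condition collapses to a statement about $x-1$ being a sum of two units, and the only point that needs a little care is verifying both directions of the ``sum of two units'' reformulation (in particular remembering that $-u$ is a unit), so that the chain of equivalences stays strict rather than degenerating into a one-way implication.
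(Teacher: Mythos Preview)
Your proof is correct and follows essentially the same route as the paper: specialize the equivalent condition $(u+x)a-1\in U(R)$ to $a=1$, rewrite $u+x-1\in U(R)$ as $x-1\in U(R)+U(R)$, and use that $x\mapsto x-1$ is a bijection of $R$. The paper's argument is terser but identical in content.
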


\begin{proof}
According to the equivalent definition, $usr(1)=1$ iff for every $x\in R$,
there is a unit $u$ such that $u+x-1\in U(R)$. Hence $x-1\in U(R)+U(R)$ and
so $R=U(R)+U(R)$. Conversely, for every $x\in R$ there are units $u,v\in
U(R) $ such that $x-1=u+v$. Hence $-u+x-1=v\in U(R)$ and so $usr(1)=1$.
\end{proof}

Since $\mathbb{Z}$ is not 2-good, we infer that $0$ is the only unit sr1
element of $\mathbb{Z}$. This way, $\mathbb{Z}$ is an example of ring whose 
\emph{units have not unit sr1}.

\bigskip

Next, a useful

\begin{lemma}
\label{equiv}(i) If $a$ has unit sr1 and $v\in U(R)$ then $va$ has unit sr1.

(ii) If $a$ has unit sr1, so is $-a$.

(iii) Unit sr1 elements are invariant to conjugations.

(iv) If $a$ has unit sr1 and $v\in U(R)$ then $av$ has unit sr1.

(v) Unit sr1 elements are invariant to equivalences.
\end{lemma}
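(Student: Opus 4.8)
The plan is to work entirely with the last equivalent reformulation given in the Introduction, namely that $a$ has unit sr1 iff for every $x\in R$ there is a unit $u$ with $(u+x)a-1\in U(R)$, together with its right-hand mirror. Since unit sr1 was shown to be left--right symmetric (via Jacobson's Lemma), the same element $a$ also satisfies the right version: for every $x\in R$ there is a unit $u$ with $a(u+x)-1\in U(R)$. This right characterization is obtained by mirroring the Introduction's derivation on the other side (writing $b=1-ax$ and multiplying on the right by $u^{-1}$), and I would record it first, since it is the workhorse for the ``multiply on the right'' statements. I would then prove only (i) and (iv) by direct substitution and deduce (ii), (iii), (v) formally from them.

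For (i), given $v\in U(R)$ and any $x$, I want a unit $w$ with $(w+x)(va)-1\in U(R)$. Rewriting $(w+x)(va)=(wv+xv)a$, I apply the left characterization of $a$ to the parameter $xv$, obtaining a unit $u$ with $(u+xv)a-1\in U(R)$, and then set $w=uv^{-1}$ (a unit), so that $wv=u$ and the two expressions coincide. This is a one-line manipulation once the bookkeeping is set up. Statement (ii) then needs no separate argument: $-a=(-1)a$ and $-1\in U(R)$, so (ii) is exactly (i) with $v=-1$.

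For (iv), the same idea runs on the other side: given $v$ and $x$, I want a unit $w$ with $(av)(w+x)-1\in U(R)$; I rewrite $(av)(w+x)=a(vw+vx)$, apply the right characterization of $a$ to the parameter $vx$ to get a unit $u$ with $a(u+vx)-1\in U(R)$, and set $w=v^{-1}u$. Finally (iii) and (v) are pure compositions: an equivalence sends $a$ to $uav$ with $u,v\in U(R)$, so (i) makes $ua$ unit sr1 and then (iv) makes $(ua)v$ unit sr1, which gives (v); a conjugation $a\mapsto vav^{-1}$ is the special equivalence with left unit $v$ and right unit $v^{-1}$, so (iii) is a corollary of (v) (equivalently, apply (i) and then (iv)).

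The only real obstacle is making sure the right-hand characterization $a(u+x)-1\in U(R)$ is legitimately and correctly stated, since the whole of (iv), of (v), and the right half of (iii) rest on it; everything else is symmetric bookkeeping. I would therefore spend the care on deriving it from the symmetry result, watching the signs (the substitution $b=1-ax$ and the right multiplication by $u^{-1}$ flip things exactly as in the left-handed derivation), rather than on the near-identical substitutions in (i) and (iv) themselves.
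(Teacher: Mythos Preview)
Your proof is correct. The only genuine difference from the paper is the logical order in which (iii) and (iv) are obtained. The paper proves (iii) directly, by conjugating the unitizer: from $a+u(xa-1)\in U(R)$ one gets $v^{-1}[a+u(xa-1)]v\in U(R)$, and this rewrites as $v^{-1}av+(v^{-1}uv)\bigl[(v^{-1}xv)(v^{-1}av)-1\bigr]$, showing $v^{-1}av$ has unit sr1 with unit unitizer $v^{-1}uv$; then (iv) is deduced as $av=v(v^{-1}av)$ via (iii)+(i). You instead invoke the left--right symmetry from the Introduction (Jacobson's Lemma) to get the right-hand characterization $a(u+x)-1\in U(R)$, prove (iv) by the mirror of your argument for (i), and recover (iii) as the special case of (v). Your route is a bit more modular and makes (i)/(iv) visibly parallel, at the cost of importing the symmetry result as a black box; the paper's direct computation for (iii) is self-contained and avoids that dependence. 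Both are equally elementary.
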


\begin{proof}
(i) Suppose $x(va)+b=1$. There is $u$ such that $a+ub\in U(R)$. By left
multiplication with $v$ we get $va+vub\in U(R)$, as desired.

(ii) Just take $v=-1$ in (i).

(iii) For every $x$ there is a $u$ such that $a+u(xa-1)\in U(R)$. Then $%
v^{-1}[a+u(xa-1)]v\in U(R)$ but we can write this as $%
v^{-1}av+v^{-1}uv[(v^{-1}xv)(v^{-1}av)-1]$, as desired.

(iv) If $a$ has unit sr1 and $v\in U(R)$ then $v^{-1}av$ has unit sr1, by
(iii). Then by (i), $v(v^{-1}av)=av$ has unit sr1.

(v) Follows from (i) and (iv).
\end{proof}

\section{Unit stable range 1 matrices}

We start with an example of unit sr1 (square) matrix over any ring.

\begin{proposition}
\label{re}Let $R$ be any ring. For any positive integer $n$ and any $r\in R$%
, the matrices $rE_{ij}$ have unit sr1 in $\mathbb{M}_{n}(R)$.
\end{proposition}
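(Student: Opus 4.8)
The plan is to reduce the general statement to the single matrix $rE_{11}$ and then verify the unit sr1 condition directly by a triangularity argument. First I would invoke the equivalence invariance of Lemma~\ref{equiv}(v): since permutation matrices are units in $\mathbb{M}_n(R)$ over any ring $R$, for suitable permutation matrices $P,Q$ one has $P(rE_{ij})Q = rE_{11}$, because permuting rows and columns moves the lone nonzero entry $r$ from position $(i,j)$ to position $(1,1)$. Hence $rE_{ij}$ has unit sr1 if and only if $rE_{11}$ does, and it suffices to treat $A := rE_{11}$.

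Next I would apply the normalized form of the definition recorded in the Introduction: $A$ has unit sr1 iff for every $X \in \mathbb{M}_n(R)$ there is a unit $U$ with $(U+X)A - I_n$ a unit. Writing $W = U + X$, a direct entrywise computation gives $(W\cdot rE_{11})_{kl} = W_{k1}\, r\, \delta_{l1}$, so $W\cdot rE_{11}$ is the matrix whose first column is $(W_{11}r, W_{21}r, \dots, W_{n1}r)^{\mathsf T}$ and whose remaining columns vanish. Consequently $(U+X)A - I_n = W\cdot rE_{11} - I_n$ is \emph{lower triangular}, with diagonal entries $W_{11}r - 1, -1, \dots, -1$, all other nonzero entries lying below the diagonal in the first column.

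The key point is then that, over any ring, a triangular matrix is a unit as soon as all its diagonal entries are units: splitting it as $D + L$ with $D$ its (invertible) diagonal part and $L$ strictly lower triangular, the product $D^{-1}L$ is again strictly lower triangular and hence nilpotent, so $D^{-1}(D+L) = I_n + D^{-1}L$ is invertible. Thus $(U+X)A - I_n$ is a unit precisely when $W_{11}r - 1 = (U_{11} + X_{11})r - 1$ is a unit in $R$, and it suffices to pick a unit $U$ with $(1,1)$-entry $U_{11} = -X_{11}$, forcing that diagonal entry to be $-1$. For $n \geq 2$ such a unit always exists: take $U = \left(\begin{smallmatrix} -X_{11} & -1 \\ 1 & 0 \end{smallmatrix}\right) \oplus I_{n-2}$ (the $I_{n-2}$ block being absent when $n=2$), whose top-left block is invertible over any ring with inverse $\left(\begin{smallmatrix} 0 & 1 \\ -1 & -X_{11} \end{smallmatrix}\right)$, as one checks on both sides.

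The main obstacle I anticipate is bookkeeping rather than conceptual: confirming the triangular-invertibility fact over a noncommutative $R$, where determinants are unavailable and one must argue through nilpotence of the strictly triangular part, and verifying that the prescribed $U$ is genuinely a unit with the required $(1,1)$-entry. Once these are settled the conclusion holds for $rE_{11}$, and by the opening reduction it holds for every $rE_{ij}$.
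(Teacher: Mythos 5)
Your proof is correct, and it follows the same skeleton as the paper's --- reduce by equivalence (Lemma~\ref{equiv}(v)) and then exhibit a unitizer that is itself a unit --- but it executes the second step quite differently. The paper simply recalls that the sr1 proof in \cite{ca-po} already used a specific unitizer (an anti-diagonal matrix with $(-1)^n a_1$ in one corner) and observes that this unitizer happens to be invertible, so the sr1 argument upgrades to usr1 for free; the verification that the resulting $(U+X)A-I_n$ is a unit is left to the cited reference. You instead reduce all the way to $rE_{11}$ via permutation matrices, choose the unit $U=\left(\begin{smallmatrix} -X_{11} & -1 \\ 1 & 0\end{smallmatrix}\right)\oplus I_{n-2}$, and check invertibility of $(U+X)rE_{11}-I_n$ directly by the standard fact that a triangular matrix with unit diagonal is invertible over any (noncommutative) ring, via nilpotence of the strictly triangular part. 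Your computation of $W\cdot rE_{11}$, the two-sided inverse of the $2\times 2$ block, and the factorization $D+L=D(I+D^{-1}L)$ all check out. The net effect is that your argument is self-contained where the paper's is not, and it is in fact the $n\times n$ generalization of the paper's own Example~2 following the proposition (which uses essentially the same $2\times 2$ unitizer for $rE_{11}$). The only caveat, shared with the paper, is that the construction requires $n\geq 2$ (consistent with the paper's standing convention on $\mathbb{M}_n(R)$), since for $n=1$ the statement would be false.
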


\begin{proof}
In \cite{ca-po}, it was proved that the matrices $rE_{ij}$ have sr1, using
the invariance to equivalences, and for any $X\in \mathbb{M}_{n}(R)$, the
unitizer 
\begin{equation*}
Y=\left[ 
\begin{array}{ccccc}
0 & 0 & \cdots & 0 & 1 \\ 
0 & 0 & \cdots & 1 & 0 \\ 
\vdots & \vdots & \ddots & \vdots & \vdots \\ 
0 & 1 & \cdots & 0 & 0 \\ 
1 & 0 & \cdots & 0 & (-1)^{n}a_{1}%
\end{array}%
\right]
\end{equation*}%
where $\mathrm{col}_{1}(X)=\left[ 
\begin{array}{c}
a_{1} \\ 
\vdots \\ 
a_{n}%
\end{array}%
\right] $. Since by Lemma \ref{equiv}, unit sr1 is also invariant to
equivalences and this unitizer is a unit, the statement follows.
\end{proof}

\textbf{Examples}. 1) \emph{Over any ring}, $rE_{12}$ \emph{has usr1}.

For every $X=\left[ 
\begin{array}{cc}
a & b \\ 
c & d%
\end{array}%
\right] $, we can also take the unitizer $U=\left[ 
\begin{array}{cc}
1 & 0 \\ 
-c & 1%
\end{array}%
\right] $. Then $(U+X)rE_{12}-I_{2}=\left[ 
\begin{array}{cc}
-1 & (a+1)r \\ 
0 & -1%
\end{array}%
\right] $ is a unit.

2) \emph{Over any ring}, $rE_{11}$ \emph{has usr1}.

For every $X=\left[ 
\begin{array}{cc}
a & b \\ 
c & d%
\end{array}%
\right] $, we can also take the unitizer $U=\left[ 
\begin{array}{cc}
-a & 1 \\ 
-1 & 0%
\end{array}%
\right] $. Then $(U+X)rE_{11}-I_{2}=\left[ 
\begin{array}{cc}
-1 & 0 \\ 
r(c-1) & -1%
\end{array}%
\right] $ is a unit.

Recall that a ring is called \textsl{GCD} if every pair of elements has a $%
\gcd $ (greatest common divisor).

\begin{corollary}
Over any GCD ring $R\,$, nilpotents and idempotents $\neq I_{2}$ have unit
sr1 in $\mathbb{M}_{2}(R)$.
\end{corollary}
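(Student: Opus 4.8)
The plan is to reduce both families of matrices to the matrices $rE_{ij}$ of Proposition \ref{re} and then invoke the invariance of unit sr1 under equivalence (Lemma \ref{equiv}(v)). Concretely, over a GCD domain $R$ I will show that every nilpotent and every idempotent $\neq I_2$ in $\mathbb{M}_2(R)$ is equivalent to a matrix of the form $rE_{11}$; since $rE_{11}$ has unit sr1 by Proposition \ref{re}, the statement follows immediately. The feature common to both cases is singularity. For an idempotent $E$ one has $\det E = \det E^2 = (\det E)^2$, so $\det E\in\{0,1\}$, and $\det E = 1$ would make $E$ an invertible idempotent, forcing $E = I_2$; hence $E \neq I_2$ gives $\det E = 0$. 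For a nilpotent $N$, $(\det N)^k = \det(N^k) = 0$ for some $k$, so $\det N = 0$ in a domain. Thus in every case the matrix has rank at most one, and the whole problem reduces to a Smith-type diagonalisation of rank-one matrices.

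For the nilpotent case I would first note that over a domain $N^2 = 0$: by Cayley--Hamilton $N^2 = (\operatorname{tr}N)N - (\det N)I_2 = (\operatorname{tr}N)N$, so $N^k = (\operatorname{tr}N)^{k-1}N$, and nilpotency forces $\operatorname{tr}N = 0$ once $N \neq 0$. Writing $N = \left[\begin{smallmatrix} a & b \\ c & -a\end{smallmatrix}\right]$ with $a^2 + bc = 0$ and putting $g = \gcd(a,c)$, $a = ga_1$, $c = gc_1$ with $\gcd(a_1,c_1)=1$, the relation $g a_1^2 = -bc_1$ together with Euclid's lemma (valid in a GCD domain) yields $c_1 \mid g$, whence a short computation gives the factorisation
\[
N = h\begin{pmatrix} a_1 \\ c_1 \end{pmatrix}\begin{pmatrix} c_1 & -a_1 \end{pmatrix},\qquad h = g/c_1 .
\]
Thus $N$ is $h$ times an outer product of two coprime vectors, and the aim is to produce invertible $P,Q$ with $PNQ = hE_{11}$ (the case $N = 0$ being trivial, since $0$ has unit sr1).

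For an idempotent $E = \left[\begin{smallmatrix} a & b \\ c & 1-a\end{smallmatrix}\right]$ with $a(1-a)=bc$, the relevant remark is that $\gcd(a,1-a)$ divides $a+(1-a)=1$, so the entries of $E$ have gcd a unit; the first invariant factor is then a unit and the same diagonalisation should give $E \sim E_{11}$ (again $E = 0$ is trivial). So both families collapse to the single task of diagonalising a rank-one matrix of prescribed content.

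I expect the diagonalisation to be the main obstacle. From a coprime column $(a_1,c_1)$ one wants an invertible $P$ with $P(a_1,c_1)^{T} = (1,0)^{T}$, that is a B\'ezout relation $\alpha a_1 + \beta c_1 = 1$, and symmetrically an invertible $Q$ clearing the coprime row $(c_1,-a_1)$. Extracting such a relation from the bare existence of gcds is the delicate step, and it is precisely here that the GCD hypothesis must be made to work (it is automatic in a B\'ezout, hence in an elementary divisor, ring). Once $P$ and $Q$ are secured one gets $PNQ = hE_{11}$ and $PEQ = E_{11}$, and Proposition \ref{re} combined with Lemma \ref{equiv}(v) completes the argument.
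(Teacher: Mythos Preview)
Your overall strategy matches the paper's: reduce to matrices of the form $rE_{ij}$ and invoke Proposition~\ref{re}. The paper does this via \emph{similarity}, asserting (without further argument) that over a GCD ring every $2\times 2$ nilpotent is similar to some $rE_{12}$ and every nontrivial idempotent to $E_{11}$; you aim instead for \emph{equivalence} to $rE_{11}$. Since Lemma~\ref{equiv} covers both similarity and equivalence, the distinction is cosmetic.

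The genuine gap is precisely the step you yourself flag as ``delicate'': turning a coprime column $(a_1,c_1)^{T}$ into $(1,0)^{T}$ by an invertible $P$ amounts to a B\'ezout relation $\alpha a_1+\beta c_1=1$, and the bare GCD hypothesis does not supply one. This cannot be ``made to work'' in general. In $R=\mathbb{Z}[x]$ (a UFD, hence a GCD ring in the paper's sense) the nilpotent
\[
N=\begin{pmatrix}2x & -4\\ x^{2} & -2x\end{pmatrix}=\begin{pmatrix}2\\ x\end{pmatrix}\begin{pmatrix}x & -2\end{pmatrix}
\]
has your $a_1=2$, $c_1=x$ coprime but with no B\'ezout identity in $\mathbb{Z}[x]$. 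More decisively, the ideal generated by the entries of $N$ is $(4,2x,x^{2})=(2,x)^{2}$, which is not principal: if it were, it would have to be all of $\mathbb{Z}[x]$ since $\gcd(4,x^{2})=1$, yet setting $x=0$ shows $1\notin(4,2x,x^{2})$. This ideal is an equivalence invariant and equals $(r)$ for any $rE_{ij}$, so $N$ is not equivalent---hence not similar---to any $rE_{ij}$. Thus your diagonalisation step fails over general GCD rings, and the same example shows that the paper's one-line assertion for nilpotents is itself problematic; the argument as written really needs $R$ to be B\'ezout.
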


\begin{proof}
This follows, since over any GCD ring, every $2\times 2$ nilpotent is
similar to $rE_{12}$ for some $r\in R$, and every nontrivial idempotent is
similar to $E_{11}$.
\end{proof}

In trying to generalize this result for idempotents, recall that following
Steger \cite{steg}, we say that a ring $R$ is an \textsl{ID} ring if every
idempotent matrix over $R$ is similar to a diagonal one. Thus, by a result
of Song and Guo \cite{son}, if every idempotent matrix over $R$ is
equivalent to a diagonal matrix, then $R$ is an ID ring. Examples of ID
rings include: division rings, local rings, projective-free rings, principal
ideal domains, elementary divisor rings, unit-regular rings and serial rings.

Therefore, over an ID ring, the determination of the idempotent matrices
which have unit sr1, reduces to diagonal matrices.

\begin{corollary}
Over any elementary divisor ring $R$, idempotents of $\mathbb{M}_{n}(R)$
have unit sr1.
\end{corollary}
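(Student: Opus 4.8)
The plan is to chain together the two reductions that the excerpt has prepared. Since an elementary divisor ring is an ID ring, every idempotent $E\in\mathbb{M}_n(R)$ is similar to a diagonal idempotent $D=\mathrm{diag}(e_1,\dots,e_n)$ with each $e_i^2=e_i$, and by Lemma \ref{equiv}(iii) it is enough to prove the statement for such $D$. I would then remove the nontrivial idempotents from the diagonal: the finitely many $e_i$ generate a finite Boolean subalgebra of $R$, whose atoms $f_1,\dots,f_m$ furnish a decomposition $R\cong\prod_t f_tR$ into elementary divisor rings. Because unit sr1 passes to and from finite direct products coordinatewise (a unit of a product is a tuple of units, and the defining identity for unit sr1 splits over the factors), and in each factor every $e_i$ becomes $0$ or $1$, I am reduced, after a permutation similarity, to the matrices $P_k:=\mathrm{diag}(I_k,0_{n-k})$ for $0\le k\le n$.

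For $P_k$ I would argue by a direct block computation. Splitting $U+X$ into blocks of sizes $k$ and $n-k$ gives
\begin{equation*}
(U+X)P_k-I_n=\begin{pmatrix} A-I_k & 0 \\ C & -I_{n-k}\end{pmatrix},
\end{equation*}
where $A$ is the top-left $k\times k$ block of $U+X$; being block triangular, the right-hand side is a unit iff $A-I_k=U_{11}+X_{11}-I_k$ is a unit in $\mathbb{M}_k(R)$. So everything reduces to producing, for a given $X$, a unit $U\in GL_n(R)$ whose top-left block $U_{11}$ makes $U_{11}+X_{11}-I_k$ invertible. When $2k\le n$ this needs no hypothesis on $R$: the unit
\begin{equation*}
U=\begin{pmatrix} 2I_k-X_{11} & I_k & 0 \\ -I_k & 0 & 0 \\ 0 & 0 & I_{n-2k}\end{pmatrix}
\end{equation*}
has determinant $\pm1$, prescribes $U_{11}$ freely, and yields $U_{11}+X_{11}-I_k=I_k$. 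In particular the cases $k=0$ and $k=1$ (the latter reproving Example 2) fall out immediately.

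The remaining regimes carry the real content. The extreme $k=n$ is $P_n=I_n$, which by Lemma \ref{1} has unit sr1 precisely when $\mathbb{M}_n(R)$ is 2-good; over an elementary divisor ring I would establish this by writing an arbitrary matrix in Smith normal form $SDT$ with $S,T\in GL_n(R)$ and $D$ diagonal, and checking that a diagonal matrix is a sum of two units for $n\ge2$ via an explicit companion-type construction of determinant $\pm1$. The genuine obstacle is the band $n/2<k<n$: here the anti-diagonal unit has no room, and in general not every $k\times k$ matrix is the corner of an invertible $n\times n$ matrix---over $\mathbb{Z}$, for instance, $2I_2$ is not the top-left block of any invertible $3\times3$ matrix, since that determinant is forced to be even. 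This is exactly the step that must invoke the elementary divisor hypothesis: I expect to choose $U_{11}$ inside the coset $I_k-X_{11}+GL_k(R)$ so that it is simultaneously completable to $GL_n(R)$, using the Smith normal form of the bordered matrix and the divisibility chain of its invariant factors to assemble the required unit. Making this completion argument precise is the crux of the proof.
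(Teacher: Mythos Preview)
Your argument stalls at the self-declared ``crux'': for $n/2<k<n$ you never actually produce the unit $U$, only hope that Smith normal form will let you complete a suitable $U_{11}$ to $GL_n(R)$. The obstruction you raise ($2I_2$ is not the top-left corner of any element of $GL_3(\mathbb{Z})$) is real but irrelevant, because you are free to choose $U_{11}$ anywhere in the coset $I_k-X_{11}+GL_k(R)$. For $k\ge 2$ the ring $\mathbb{M}_k(R)$ is $2$-good over an elementary divisor ring, so this coset meets $GL_k(R)$: write $I_k-X_{11}=-W+V$ with $W,V\in GL_k(R)$, set $U_{11}=W$, and take $U=\mathrm{diag}(W,I_{n-k})\in GL_n(R)$. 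Then $U_{11}+X_{11}-I_k=V$ is a unit and you are done. This single observation handles every $2\le k\le n$ uniformly (the case $k=n$ is exactly Lemma~\ref{1}), so your three-way case split and the entire ``completion'' worry evaporate; only $k\in\{0,1\}$ need the separate, trivial treatment you already gave.

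The paper's route is shorter but leans on more. It passes at once to $P_m=\mathrm{diag}(I_m,0_{n-m})$ (tacitly taking the diagonal idempotents to be $0$ or $1$; your Boolean decomposition justifies this more carefully), then regroups $P_m=\mathrm{diag}(I_{m-1},E_{11})$ with $E_{11}\in\mathbb{M}_{n-m+1}(R)$. Proposition~\ref{re} gives unit sr1 for the $E_{11}$ block and Lemma~\ref{1} together with $2$-goodness gives it for $I_{m-1}$, after which the block-diagonal $P_m$ is declared to have unit sr1. That last inference---that a block-diagonal matrix inherits unit sr1 from its diagonal blocks---is stated without proof in the paper, and with a block-diagonal unitizer the off-diagonal blocks of $(U+X)P_m-I_n$ do not vanish, so it is not entirely immediate. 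Your block-triangular computation, once repaired as above, is arguably the cleaner way to finish.
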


\begin{proof}
We have to check all block matrices $E=\left[ 
\begin{array}{cc}
I_{m} & \mathbf{0} \\ 
\mathbf{0} & 0_{n-m}%
\end{array}%
\right] $. The statement reduces to the fact that $E_{11}$ has unit sr1 in $%
\mathbb{M}_{n}(R)$ for any $n\geq 2$. Indeed, $E=E_{11}+...+E_{mm}$ $=\left[ 
\begin{array}{cc}
I_{m-1} & 0 \\ 
0 & E_{11}%
\end{array}%
\right] $, where $E_{11}\in \mathbb{M}_{n-m+1}(R)$ has unit sr1 by
Proposition \ref{re} and $I_{m-1}\in \mathbb{M}_{m-1}(R)$ has unit sr1 by
Lemma \ref{1} and the well-known result (see \cite{vam}): any proper matrix
ring over an elementary divisor ring is 2-good. So $E=\left[ 
\begin{array}{cc}
I_{m-1} & 0 \\ 
0 & E_{11}%
\end{array}%
\right] $ has unit sr1 in $\mathbb{M}_{n}(R)$.
\end{proof}

\bigskip

\textbf{Remark}. In \cite{chen} Theorem 2.2.4, it is proved that for any
ring $R$, and positive integers $m$, $n$, if $\mathbb{M}_{m}(R)$ and $%
\mathbb{M}_{n}(R)$ have unit 1-stable range, then so does $\mathbb{M}%
_{m+n}(R)$.

\bigskip

As our first main result, we give a characterization of unit stable range
one for $2\times 2$ and $3\times 3$ matrices over any commutative ring.

\begin{theorem}
\label{cha}(i) Let $R$ be a commutative ring and $A\in \mathbb{M}_{2}(R)$.
Then $A$ has unit stable range 1 iff for any $X\in \mathbb{M}_{2}(R)$ there
exists a unit $U\in \mathbb{M}_{2}(R)$ such that%
\begin{equation*}
\det ((U+X)A)-\mathrm{Tr}((U+X)A)+1
\end{equation*}%
is a unit of $R$.

(ii) Let $R$ be a commutative ring and $A\in \mathbb{M}_{3}(R)$. Then $A$
has unit stable range 1 iff for any $X\in \mathbb{M}_{3}(R)$ there exists a
unit $U\in \mathbb{M}_{3}(R)$ such that%
\begin{equation*}
\det ((U+X)A)-\mathrm{Tr}(adj(U+X)A)+\mathrm{Tr}((U+X)A)-1
\end{equation*}%
is a unit of $R$, where, for any matrix $B\in \mathbb{M}_{3}(R)$, $adj(B)$
denotes the classical adjoint (also called adjugate) of $B$.
\end{theorem}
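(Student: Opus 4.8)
The plan is to reduce the abstract unit‑sr1 condition to a concrete determinantal criterion by exploiting the equivalent formulation already established in the Introduction: $A$ has unit sr1 iff for every $X\in\mathbb{M}_n(R)$ there is a unit $U$ such that $(U+X)A-I_n$ is a unit of $\mathbb{M}_n(R)$. Over a commutative ring, a matrix $M\in\mathbb{M}_n(R)$ is a unit precisely when $\det(M)\in U(R)$. So the entire statement collapses to computing $\det\bigl((U+X)A-I_n\bigr)$ and showing it equals the displayed polynomial in the entries of $(U+X)A$.

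For part (i), let me write $B=(U+X)A\in\mathbb{M}_2(R)$. **First I would** expand $\det(B-I_2)$ directly. For a $2\times 2$ matrix $B=\begin{pmatrix}p&q\\ r&s\end{pmatrix}$ we have $\det(B-I_2)=(p-1)(s-1)-qr=ps-qr-(p+s)+1=\det(B)-\mathrm{Tr}(B)+1$. Substituting $B=(U+X)A$ gives exactly $\det((U+X)A)-\mathrm{Tr}((U+X)A)+1$. Thus $(U+X)A-I_2$ is a unit iff this scalar is a unit of $R$, and quantifying over all $X$ and asserting existence of a unit $U$ yields the claimed equivalence. This is essentially the characteristic‑polynomial identity $\det(B-\lambda I)=\lambda^2-\mathrm{Tr}(B)\lambda+\det(B)$ evaluated at $\lambda=1$.

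For part (ii) the same strategy applies with $n=3$, using the general identity $\det(B-I_3)=-\det(I_3-B)$ and the characteristic polynomial $\det(\lambda I_3-B)=\lambda^3-\mathrm{Tr}(B)\lambda^2+\mathrm{Tr}(\mathrm{adj}(B))\lambda-\det(B)$. **Then I would** evaluate at $\lambda=1$: $\det(I_3-B)=1-\mathrm{Tr}(B)+\mathrm{Tr}(\mathrm{adj}(B))-\det(B)$, so $\det(B-I_3)=-\bigl[1-\mathrm{Tr}(B)+\mathrm{Tr}(\mathrm{adj}(B))-\det(B)\bigr]=\det(B)-\mathrm{Tr}(\mathrm{adj}(B))+\mathrm{Tr}(B)-1$. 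With $B=(U+X)A$ this is precisely the displayed expression. Since $\mathrm{Tr}(\mathrm{adj}(B))$ is the second elementary symmetric function of the eigenvalues (the sum of the $2\times 2$ principal minors), I would verify this coefficient identification holds over any commutative ring, where the characteristic‑polynomial coefficients are universal polynomial identities in the matrix entries and hence valid by the usual specialization argument from $\mathbb{Z}[x_{ij}]$.

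**The main obstacle**, though modest, is justifying the coefficient $\mathrm{Tr}(\mathrm{adj}(B))$ over an arbitrary commutative ring rather than over a field. One cannot invoke eigenvalues directly, so I would instead treat the entries as indeterminates over $\mathbb{Z}$, establish the polynomial identity $\det(\lambda I_3 - B)=\lambda^3-\mathrm{Tr}(B)\lambda^2+\mathrm{Tr}(\mathrm{adj}(B))\lambda-\det(B)$ in $\mathbb{Z}[\lambda, x_{11},\dots,x_{33}]$ by a straightforward cofactor expansion, and then specialize. The only remaining care is the bookkeeping of signs when passing from $\det(I_3-B)$ to $\det(B-I_3)$ via the factor $(-1)^3=-1$; everything else is the standard determinantal expansion, which I would not grind through in detail.
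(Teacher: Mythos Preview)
Your proof is correct and follows essentially the same approach as the paper: both reduce to the equivalent definition $(U+X)A-I_n\in U(\mathbb{M}_n(R))$, invoke that invertibility over a commutative ring is detected by the determinant, and then use the identities $\det(C-I_2)=\det(C)-\mathrm{Tr}(C)+1$ and $\det(C-I_3)=\det(C)-\mathrm{Tr}(\mathrm{adj}(C))+\mathrm{Tr}(C)-1$. Your added care in justifying the $\mathrm{Tr}(\mathrm{adj}(B))$ coefficient via a universal polynomial identity over $\mathbb{Z}[x_{ij}]$ is more detail than the paper gives, but the route is the same.
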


\begin{proof}
(i) Using the equivalent definition given in the Introduction, $A$ has unit
stable range 1 iff for any $X\in \mathbb{M}_{2}(R)$ there is a unit $U\in 
\mathbb{M}_{2}(R)$ such that $(U+X)A-I_{2}$ is a unit. Since the base ring
is supposed to be commutative, $(U+X)A-I_{2}$ is invertible iff $\det
((U+X)A-I_{2}))$ is a unit of $R$. Since for any $2\times 2$ matrix $C$, $%
\det (C-I_{2})=\det (C)-\mathrm{Tr}(C)+1$, the statement follows.

(ii) The proof is analogous, relying on the formula $\det (C-I_{3})=\det (C)-%
\mathrm{Tr}(adj(C))+\mathrm{Tr}(C)-1$, where here $C$ is any $3\times 3$
matrix.
\end{proof}

\bigskip

As this was done in \cite{ca-po} for sr1, $2\times 2$ matrices, we obtain
alternative proofs for

\begin{corollary}
Let $R$ be a commutative ring and $A\in \mathbb{M}_{2}(R)$ or $A\in \mathbb{M%
}_{3}(R)$. Then $A$ has left unit stable range 1 iff $A$ has right unit
stable range 1.
\end{corollary}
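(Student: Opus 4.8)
The plan is to derive a ``right-handed'' analogue of Theorem \ref{cha} and then observe that it coincides, expression by expression, with the left-handed characterization already established. First I would run the chain of equivalent definitions from the Introduction on the right rather than on the left: reasoning symmetrically, $A$ has right unit sr1 iff for every $X\in \mathbb{M}_{n}(R)$ there is a unit $U$ such that $A(U+X)-I_{n}$ is a unit. Over a commutative $R$ this matrix is invertible iff its determinant is a unit of $R$, so applying $\det (C-I_{2})=\det (C)-\mathrm{Tr}(C)+1$ with $C=A(U+X)$ (and the corresponding degree-three identity for $n=3$) turns the right-hand condition into: there is a unit $U$ such that $\det (A(U+X))-\mathrm{Tr}(A(U+X))+1$ (resp. $\det (A(U+X))-\mathrm{Tr}(adj(A(U+X)))+\mathrm{Tr}(A(U+X))-1$) is a unit of $R$.

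Next I would verify that every ingredient appearing here is insensitive to the order of the factors $A$ and $U+X$. For the determinant this is commutativity of $R$, since $\det (A(U+X))=\det (A)\det (U+X)=\det (U+X)\det (A)=\det ((U+X)A)$. For the trace it is the cyclic identity $\mathrm{Tr}(MN)=\mathrm{Tr}(NM)$, valid over any ring. For the $3\times 3$ adjugate term I would combine the anti-multiplicativity of the classical adjoint, $adj(MN)=adj(N)\,adj(M)$, with cyclicity of the trace to obtain $\mathrm{Tr}(adj(A(U+X)))=\mathrm{Tr}(adj(U+X)\,adj(A))=\mathrm{Tr}(adj(A)\,adj(U+X))=\mathrm{Tr}(adj((U+X)A))$.

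Combining these three invariances, the scalar whose unit-ness is tested in the right-handed criterion is, for each fixed $X$ and each fixed unit $U$, literally the same element of $R$ as the scalar tested in Theorem \ref{cha}. Since $X$ ranges over all of $\mathbb{M}_{n}(R)$ and $U$ over all units in both criteria, the existence of a suitable unitizer on one side is exactly the existence of one on the other; hence $A$ has left unit sr1 iff it has right unit sr1, for $A\in \mathbb{M}_{2}(R)$ and for $A\in \mathbb{M}_{3}(R)$.

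I expect the only step that is not purely mechanical to be the adjugate identity in the case $n=3$: one must be certain that $\mathrm{Tr}(adj(\cdot))$ is invariant under swapping the order of a product, which rests on $adj(MN)=adj(N)\,adj(M)$ (a polynomial identity valid over any commutative ring) together with cyclicity of the trace. Everything else follows immediately from commutativity of $R$ and the cyclic property of $\mathrm{Tr}$. As a sanity check one may note that the conclusion is also a consequence of the left--right symmetry of unit sr1 for ring elements recorded in the Introduction (via Jacobson's Lemma), applied to $A$ regarded as an element of $\mathbb{M}_{n}(R)$; the route through Theorem \ref{cha} is precisely the ``alternative proof'' the text advertises.
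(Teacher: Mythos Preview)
Your argument is correct and follows the same overall strategy as the paper: set up the right-handed analogue of Theorem~\ref{cha} and then show, term by term, that the scalar being tested for invertibility is the same whether one writes $(U+X)A$ or $A(U+X)$. The one place you diverge is the $3\times 3$ adjugate term. The paper establishes $\mathrm{Tr}(adj(CD))=\mathrm{Tr}(adj(DC))$ via the Newton-type identity $\mathrm{Tr}(adj(C))=\tfrac{1}{2}\bigl(\mathrm{Tr}^{2}(C)-\mathrm{Tr}(C^{2})\bigr)$ together with $\mathrm{Tr}((CD)^{2})=\mathrm{Tr}((DC)^{2})$, whereas you use the anti-multiplicativity $adj(MN)=adj(N)\,adj(M)$ plus cyclicity of the trace. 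Your route is arguably cleaner: it is a polynomial identity over $\mathbb{Z}$ and so passes to every commutative ring without needing $2$ to be a non-zero-divisor or a unit, a point the paper's formulation with $\tfrac{1}{2}$ leaves implicit.
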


\begin{proof}
Using the properties of determinants, the properties of the trace and the
commutativity of the base ring, it is readily seen that for $2\times 2$
matrices, $\det (U+X)A-\mathrm{Tr}(U+X)A+1=\det A(U+X)-\mathrm{Tr}A(U+X)+1$.
For $3\times 3$ matrices $C$, $D$ we just recall the known formulas 
\begin{equation*}
adj(C)=\dfrac{1}{2}(\mathrm{Tr}^{2}(C)-\mathrm{Tr}(C^{2}))-C\mathrm{Tr}%
(C)+C^{2}
\end{equation*}%
and $\mathrm{Tr}((CD)^{2})=\mathrm{Tr}((DC)^{2})$.

Hence $\mathrm{Tr}(adj(CD))=\dfrac{1}{2}(\mathrm{Tr}^{2}(CD)-\mathrm{Tr}%
((CD)^{2}))=\dfrac{1}{2}(\mathrm{Tr}^{2}(DC)-\mathrm{Tr}((DC)^{2})=\mathrm{Tr%
}(adj(DC))$. All this shows that $\det ((U+X)A-I_{2}))=\det (A(U+X)-I_{2}))$%
, as desired.
\end{proof}

\bigskip

In the sequel, we use the notation $\mathrm{diag}(r,s):=\left[ 
\begin{array}{cc}
r & 0 \\ 
0 & s%
\end{array}%
\right] $. Next, another useful

\begin{lemma}
\label{trans}Over any ring $R$, the following statements hold.

(i) An $n\times n$ matrix $A$\ has unit sr1 iff its transpose $A^{T}$\ has
unit sr1.

(ii) $\mathrm{diag}(r,s)$ has unit sr1 iff $\mathrm{diag}(s,r)$ has unit sr1.

(iii) $\mathrm{diag}(r,s)$ has unit sr1 iff $\mathrm{diag}(r,-s)$ has unit
sr1.
\end{lemma}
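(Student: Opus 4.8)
My plan is to obtain the two diagonal equivalences (ii) and (iii) as immediate consequences of Lemma \ref{equiv}, and to treat the transpose equivalence (i) through the left-right symmetry of unit sr1 recorded in the Introduction. For (ii), I would realise $\mathrm{diag}(s,r)$ as a conjugate of $\mathrm{diag}(r,s)$: with the permutation matrix $P=E_{12}+E_{21}$, which is a unit satisfying $P^{-1}=P$, a direct check gives $P\,\mathrm{diag}(r,s)\,P^{-1}=\mathrm{diag}(s,r)$, so Lemma \ref{equiv}(iii) (invariance under conjugation) finishes it. For (iii), I would factor $\mathrm{diag}(r,-s)=\mathrm{diag}(r,s)\,\mathrm{diag}(1,-1)$, where $\mathrm{diag}(1,-1)$ is a unit equal to its own inverse; Lemma \ref{equiv}(iv) (right multiplication by a unit) yields one implication and a second multiplication by $\mathrm{diag}(1,-1)$ yields the converse. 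Both arguments involve only the central elements $0,\pm 1$, so they are valid over an arbitrary ring.

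The content is in (i), and the plan is to combine transposition with left-right symmetry. Over a commutative ring, $B\mapsto B^{T}$ is an anti-automorphism of $\mathbb{M}_{n}(R)$: it is additive and bijective, fixes $I_{n}$, preserves units (since $\det B^{T}=\det B$), and satisfies $(BC)^{T}=C^{T}B^{T}$. An anti-automorphism sends a left unit sr1 element to a right unit sr1 element, and by left-right symmetry the image again has unit sr1; applying this to $A$ shows $A^{T}$ has unit sr1, and applying it to $A^{T}$ (using $(A^{T})^{T}=A$) gives the converse. Equivalently, and more concretely, from a witness $(U+Y^{T})A-I_{n}\in U(\mathbb{M}_{n}(R))$ I would transpose to obtain $A^{T}(U^{T}+Y)-I_{n}\in U(\mathbb{M}_{n}(R))$ with $U^{T}$ a unit; this is exactly the right-hand witness for $A^{T}$ at the datum $Y$, and left-right symmetry then delivers the required left unit sr1 statement.

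The step I expect to be delicate is honouring the words ``over any ring'' in (i): the identity $(BC)^{T}=C^{T}B^{T}$, hence the anti-automorphism property, genuinely needs commutativity. For noncommutative $R$ the transpose is instead an isomorphism $\mathbb{M}_{n}(R)^{op}\cong\mathbb{M}_{n}(R^{op})$, so the argument reshapes to: $A$ has unit sr1 in $\mathbb{M}_{n}(R)$ iff it has unit sr1 in $\mathbb{M}_{n}(R)^{op}$ (left-right symmetry), iff $A^{T}$ has unit sr1 in $\mathbb{M}_{n}(R^{op})$ (transport along the isomorphism). The subtle point is that this lands the property in $\mathbb{M}_{n}(R^{op})$ rather than in $\mathbb{M}_{n}(R)$; to close the gap I would either restrict to commutative $R$ — the only case the rest of the paper needs — or equip $R$ with an involution so that conjugate-transpose becomes an honest anti-automorphism of $\mathbb{M}_{n}(R)$.
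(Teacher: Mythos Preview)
Your approach coincides with the paper's: (ii) by conjugation with $E_{12}+E_{21}$ and Lemma \ref{equiv}(iii), (iii) via equivalence with $\mathrm{diag}(r,s)$ (the paper cites Lemma \ref{equiv}(v) rather than (iv), a cosmetic difference), and (i) by transposing a witness and invoking left-right symmetry. Your concern about ``over any ring'' in (i) is well founded and applies equally to the paper's own one-line proof, which tacitly uses $(BC)^{T}=C^{T}B^{T}$; since the remainder of the paper only needs the commutative case, your proposed restriction is the honest reading.
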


\begin{proof}
(i) Indeed, using the left-right symmetry of the unit sr1 property, if $%
(U+X)A-I_{2}$ is a unit, so is its transpose $A^{T}(U^{T}+X^{T})-I_{2}$.

(ii) Follows from Lemma \ref{equiv},(iii), by conjugation with the
involution $E_{12}+E_{21}$.

(iii) Follows from Lemma \ref{equiv} (v), since $\mathrm{diag}(r,-s)$ is
equivalent to $\mathrm{diag}(r,s)$.
\end{proof}

\bigskip

Recall that a commutative unital ring $R$ is an \textsl{elementary divisor}
ring provided every matrix over $R$ is equivalent to a diagonal matrix.
Elementary divisor rings include PIDs, left PIDs which are B\'{e}zout (in
particular division rings), valuation rings and the ring of entire functions.

The definition above, given by M. Henriksen (see \cite{hen}), is more
general than the one given by I. Kaplansky in \cite{kap} and is nowadays in
use.

We have already mentioned that \emph{any proper matrix ring over an
elementary divisor ring is 2-good}. Over an Euclidean domain, proper $%
n\times n$ matrix rings are even \textsl{strongly 2-good} (i.e., a sum of
two invertible matrices that are elementary matrices, permutation matrices
and $-I_{n}$). Hence, in particular, $\mathbb{M}_{2}(\mathbb{Z})$ is
(strongly) 2-good.

\bigskip

A surprising and our second main result is the following

\begin{theorem}
\label{inte}Let $A$ be an integral $2\times 2$ matrix. The following
conditions are equivalent

(i) $A$ has unit sr1,

(ii) $A$ has sr1,

(iii) $\det (A)\in \{-1,0,1\}$.
\end{theorem}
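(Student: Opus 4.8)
The plan is to prove the cycle of implications (i) $\Rightarrow$ (ii) $\Rightarrow$ (iii) $\Rightarrow$ (i). The first implication is immediate from the definitions: unit sr1 asks for a \emph{unit} unitizer whereas sr1 asks only for an arbitrary element of the ring, so any witness to unit sr1 is in particular a witness to sr1. The equivalence (ii) $\Leftrightarrow$ (iii), namely that an integral $2\times 2$ matrix has sr1 precisely when its determinant lies in $\{-1,0,1\}$, is the characterization of sr1 for $2\times 2$ integral matrices established in \cite{ca-po}, which I would cite rather than reprove. Consequently, all the real work is in (iii) $\Rightarrow$ (i).

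For (iii) $\Rightarrow$ (i) I would use that $\mathbb{Z}$ is an elementary divisor ring, so by Smith normal form every $A\in \mathbb{M}_{2}(\mathbb{Z})$ is equivalent to a diagonal matrix $\mathrm{diag}(d_{1},d_{2})$ with $d_{1}\mid d_{2}$ and $d_{1}d_{2}=|\det (A)|$ (taking $d_{i}\geq 0$). Since unit sr1 is invariant under equivalence by Lemma \ref{equiv}(v), it suffices to handle these diagonal normal forms, and the hypothesis splits into the two cases $\det (A)=0$ and $\det (A)=\pm 1$. If $\det (A)=0$, the divisibility $d_{1}\mid d_{2}$ forces $d_{2}=0$, so the normal form is $\mathrm{diag}(d_{1},0)=d_{1}E_{11}$; by Proposition \ref{re} this has unit sr1, and equivalence-invariance transfers the property back to $A$. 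If $\det (A)=\pm 1$, then $A\in U(\mathbb{M}_{2}(\mathbb{Z}))$ and its Smith form is forced to be $I_{2}$, so $A$ is equivalent to $I_{2}$; here I would invoke Lemma \ref{1}, which says $I_{2}$ has unit sr1 iff $\mathbb{M}_{2}(\mathbb{Z})$ is 2-good, together with the fact recorded above that $\mathbb{M}_{2}(\mathbb{Z})$ is (strongly) 2-good. A final appeal to Lemma \ref{equiv}(v) transfers unit sr1 from $I_{2}$ to $A$, closing the cycle.

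The one step carrying genuine content, and the reason the statement is called \emph{surprising}, is the case $\det (A)=\pm 1$. Naively unit sr1 should be strictly stronger than sr1, and indeed it is over $\mathbb{Z}$ itself, where the units $\pm 1$ fail to have unit sr1. The point to get right is that passing to $2\times 2$ matrices enlarges the unit group enough that $\mathbb{M}_{2}(\mathbb{Z})$ becomes 2-good, whence the invertible integral matrices, which are exactly those with $\det =\pm 1$, do acquire unit sr1. Apart from this conceptual observation, I expect the only delicate bookkeeping to be the Smith normal form analysis, specifically that $d_{1}\mid d_{2}$ with $d_{1}d_{2}=0$ forces $\mathrm{diag}(d_{1},0)$ and that $|\det |=1$ forces $\mathrm{diag}(1,1)=I_{2}$; everything else reduces to direct citations of Proposition \ref{re}, Lemma \ref{1}, and Lemma \ref{equiv}(v). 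As an alternative for the $\det =0$ case, one could bypass Smith form entirely and verify the criterion of Theorem \ref{cha}(i) directly, noting that $\det ((U+X)A)=0$ reduces the required condition to arranging $\mathrm{Tr}((U+X)A)\in \{0,2\}$, which is achievable by choosing a unit $U$ with a prescribed $(1,1)$ entry.
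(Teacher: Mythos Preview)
Your proof is correct and follows essentially the same route as the paper: (i) $\Rightarrow$ (ii) trivially, (ii) $\Leftrightarrow$ (iii) by citation to \cite{ca-po}, and (iii) $\Rightarrow$ (i) by reducing via equivalence-invariance over the elementary divisor ring $\mathbb{Z}$ to diagonal matrices, then invoking Proposition~\ref{re} for the zero-determinant case and Lemma~\ref{1} plus the 2-goodness of $\mathbb{M}_{2}(\mathbb{Z})$ for the unit case. Your use of the full Smith normal form (with $d_{1}\mid d_{2}$, $d_{i}\geq 0$) is a mild streamlining of the paper's enumeration of all diagonal forms $0_{2}$, $\pm I_{2}$, $\pm\mathrm{diag}(1,-1)$, $nE_{11}$, $mE_{22}$, which the paper then collapses to $I_{2}$ via Lemma~\ref{equiv}(ii) and Lemma~\ref{trans}; the substance is identical.
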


\begin{proof}
(i) $\Rightarrow $ (ii) is obvious and (ii) $\Leftrightarrow $ (iii) was
proved in \cite{ca-po}. Since both (iii) and (i) are invariant to
equivalences (over $\mathbb{Z}$), and $\mathbb{Z}$ is an elementary divisor
ring, for (iii) $\Rightarrow $ (i), it only remains to check that diagonal $%
2\times 2$ matrices with $\det (A)\in \{-1,0,1\}$ have unit sr1. But these
are $0_{2}$, $\pm I_{2}$, $\pm \left[ 
\begin{array}{cc}
1 & 0 \\ 
0 & -1%
\end{array}%
\right] $ and matrices $nE_{11}$ or $mE_{22}$ with nonzero integers $n$, $m$%
. Since $0$ has unit sr1 in any ring, using Lemma \ref{equiv}, (ii) and
Lemma \ref{trans}, and using Proposition \ref{re} for matrices with three
zero entries, it only remains to show that $I_{2}$ has unit sr1. But this
follows from Lemma \ref{1}, since $\mathbb{M}_{2}(\mathbb{Z})$ is (even
strongly) 2-good.
\end{proof}

Since these properties were proved in \cite{ca-po} for sr1 matrices, it
follows at once

\begin{corollary}
(i) In general, unit stable range 1 elements do not have the "complementary
property" (i.e., $1-a$ has unit sr1 whenever $a$ has it).

(ii) In $\mathbb{M}_{2}(\mathbb{Z})$, $AB$ has unit stable range 1 iff so
has $BA$.

(iii) Jacobson's Lemma holds for unit stable range 1 matrices in $\mathbb{M}%
_{2}(\mathbb{Z})$.
\end{corollary}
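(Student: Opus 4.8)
The plan is to invoke Theorem \ref{inte}, which collapses the unit stable range one property for integral $2\times 2$ matrices to the purely numerical condition $\det\in\{-1,0,1\}$; once this is in hand all three assertions become statements about determinants and should fall out immediately.

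For (i), the idea is to exhibit a single matrix $A$ with $\det(A)\in\{-1,0,1\}$ (so that $A$ has usr1 by Theorem \ref{inte}) for which $\det(I_2-A)\notin\{-1,0,1\}$ (so that $I_2-A$ fails usr1). Using the $2\times 2$ identity $\det(I_2-A)=\det(A)-\mathrm{Tr}(A)+1$ already recorded in the proof of Theorem \ref{cha}, I would take a matrix with determinant $0$ and large trace, e.g.\ $A=3E_{11}$, for which $\det(A)=0$ but $\det(I_2-A)=1-3+0=-2$. This is the same counterexample that works for sr1 in \cite{ca-po}, which is consistent with the equivalence sr1 $\Leftrightarrow$ usr1.

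For (ii), I would use multiplicativity of the determinant over the commutative ring $\mathbb{Z}$: since $\det(AB)=\det(A)\det(B)=\det(BA)$, the condition $\det\in\{-1,0,1\}$ holds for $AB$ exactly when it holds for $BA$, and Theorem \ref{inte} converts this equality of determinant conditions into the desired equivalence of the usr1 property. For (iii), the analogue of Jacobson's Lemma to be proved is that $I_2+AB$ has usr1 iff $I_2+BA$ has usr1. Here the key input is that $AB$ and $BA$ share the same characteristic polynomial (equivalently, Sylvester's determinant identity $\det(I_2+AB)=\det(I_2+BA)$); combining this equality with Theorem \ref{inte} gives the statement just as in (ii).

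I do not expect any genuine obstacle: the content is entirely carried by Theorem \ref{inte}, and each part reduces to a one-line determinant fact (the trace formula for (i), multiplicativity for (ii), and equality of characteristic polynomials for (iii)). The only points demanding a little care are selecting a counterexample in (i) whose image under $A\mapsto I_2-A$ actually leaves the set $\{-1,0,1\}$, and correctly identifying the intended matrix form of Jacobson's Lemma in (iii) as a statement about $I_2+AB$ versus $I_2+BA$ rather than about $AB$ versus $BA$, the latter being part (ii).
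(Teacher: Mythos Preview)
Your proposal is correct and follows the same line as the paper. The paper's proof is even terser---it simply remarks that the corresponding statements were already established for sr1 in \cite{ca-po} and then invokes Theorem \ref{inte} to transfer them to usr1---whereas you spell out the underlying determinant facts (the trace formula for (i), multiplicativity for (ii), Sylvester's identity for (iii)); this is just an explicit unpacking of the same argument, not a different route.
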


\textbf{Remarks}. 1) We infer from the above theorem that if an integral $%
2\times 2$ has a unitizer (i.e., has sr1), then it has also a unitizer which
is a unit (i.e., has unit sr1).

2) In view of \cite{ca-po}, the previous proof reduces to check that $I_{2}$
has unit sr1. Of course, we would prefer \textbf{a direct proof} for this.

According to the equivalent definition given in the Introduction, for every
integral $2\times 2$ matrix $X=\left[ 
\begin{array}{cc}
a & b \\ 
c & d%
\end{array}%
\right] $, we should indicate a unit unitizer $U=\left[ 
\begin{array}{cc}
x & y \\ 
z & t%
\end{array}%
\right] $ such that $U+X-I_{2}=V$ is a unit of $\mathbb{M}_{2}(\mathbb{Z})$.
As already mentioned in Lemma \ref{1}, this is done in two steps.

\textbf{Step 1}. We diagonalize $X-I_{2}$, by elementary operations, that
is, find $d\,_{1}$, $d_{2}$ such that $X-I_{2}$ is equivalent to $\mathrm{%
diag}(d_{1},d_{2})$. Over $\mathbb{Z}$ this can be done, using the Euclid's
algorithm (see e.g., \cite{kap} or \cite{zab}). We just sketch this for
reader's convenience.

Using elementary row and column operations, we can equivalently replace $%
X-I_{2}$ by a matrix $M$, which we also denote by $\left[ 
\begin{array}{cc}
a & b \\ 
c & d%
\end{array}%
\right] $, such that $a$ is the least entry in absolute value, among all
matrices that $X-I_{2}$ can be reduced to. Next, if $b=aq_{1}+r_{1}$, $%
a=r_{1}q_{2}+r_{2}$ it can be shown that $r_{2}=0$ (by the minimality in
absolute value of $a$). Hence we can assume $b=aq+r$, $a=rs$ for some
integers $q$, $r$, $s$. Then $\left[ 
\begin{array}{cc}
a & b \\ 
c & d%
\end{array}%
\right] \left[ 
\begin{array}{cc}
1 & -q \\ 
0 & 1%
\end{array}%
\right] \left[ 
\begin{array}{cc}
1 & 0 \\ 
-s & 1%
\end{array}%
\right] \left[ 
\begin{array}{cc}
0 & 1 \\ 
1 & 0%
\end{array}%
\right] =\left[ 
\begin{array}{cc}
r & 0 \\ 
\ast & \ast%
\end{array}%
\right] $, reduces our matrix to (lower) triangular form. In a similar way,
this matrix is reduced to a matrix of form $\left[ 
\begin{array}{cc}
\ast & \ast \\ 
1 & 0%
\end{array}%
\right] $, which is easily diagonalized.

\textbf{Step 2}. Since $\mathrm{diag}(d_{1},d_{2})=\left[ 
\begin{array}{cc}
d_{1} & 1 \\ 
1 & 0%
\end{array}%
\right] +\left[ 
\begin{array}{cc}
0 & -1 \\ 
-1 & d_{2}%
\end{array}%
\right] =-U+V$ is 2-good, these units suit for our above purpose.

Clearly, all this can be done for $2\times 2$ matrices over any Euclidean
domain.

Therefore, it would be difficult to hope for a formula which gives in
general the unit unitizer $U$, given $X$.

\bigskip

\textbf{Examples}. 1) Matrices $M_{uv}=\left[ 
\begin{array}{cc}
1 & u \\ 
v & uv%
\end{array}%
\right] $ have unit sr1 over any commutative ring.

Indeed, having zero determinant, the characterization yields $\det
(U)(1-a-vb-uc-uvd)-t+uz+vy-uvx=\pm 1$ for which the unit unitizer

$U=\left[ 
\begin{array}{cc}
x & y \\ 
z & t%
\end{array}%
\right] =\left[ 
\begin{array}{cc}
0 & 1 \\ 
-1 & 1-a-vb-uc-uvd%
\end{array}%
\right] $ gives $+1$.

2) Simple examples show that $\mathbb{M}_{2}(\mathbb{Z})$ is \emph{not}
closed under addition of unit sr1 matrices. Indeed, both $E_{11}$, $I_{2}$
have unit sr1 but the (diagonal) sum has not.

\bigskip

Our third main result is the following

\begin{theorem}
\label{nm}Let $R$ be a commutative ring. All matrices in $\mathbb{M}_{2}(R)$
with (at least) one zero row or zero column have unit sr1 in any of the
following cases

(i) one entry divides the other;

(ii) $R$ is an Euclidean domain;

(iii) for every $a,c\in R$ there are $q,\alpha ,\beta \in R$ such that $%
(a+qs)\alpha +(-c+qr)\beta =1$ (e.g., $R$ is a B\'{e}zout domain).
\end{theorem}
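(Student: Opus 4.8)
The plan is to reduce the four possible shapes to a single normal form and then dispatch the three hypotheses. First I would note, via Lemma \ref{trans}(i) and Lemma \ref{equiv}(i),(iv), that up to transposition and multiplication by the permutation unit $E_{12}+E_{21}$ every $2\times 2$ matrix with a zero row or a zero column is equivalent to $A=\left[\begin{smallmatrix} r & s \\ 0 & 0\end{smallmatrix}\right]$; since unit sr1 is invariant under equivalence (Lemma \ref{equiv}(v)) it suffices to treat this $A$. Because $\det A=0$, Theorem \ref{cha}(i) (or a one-line direct computation) reduces the problem to the following: for a unit $U=\left[\begin{smallmatrix} x & y \\ z & t\end{smallmatrix}\right]$ and $X=\left[\begin{smallmatrix} a & b \\ c & d\end{smallmatrix}\right]$ one has $\det((U+X)A-I_2)=1-(x+a)r-(z+c)s$, so I must choose a unit $U$ making this linear expression a unit of $R$. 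The built-in difficulty is that only the first column $(x,z)$ of $U$ occurs in it, while the second column is free and must be spent on keeping $U$ invertible.

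For case (i) I would sidestep the computation. Assuming $r\mid s$ (if instead $s\mid r$, first swap the columns by right multiplication with the unit $E_{12}+E_{21}$), write $s=rm$ and right-multiply $A$ by the unit $\left[\begin{smallmatrix} 1 & -m \\ 0 & 1\end{smallmatrix}\right]$ to reach $rE_{11}$, which has unit sr1 by Proposition \ref{re}; Lemma \ref{equiv}(iv),(v) then carries the property back to $A$. Case (ii) goes identically: over a Euclidean domain the Euclidean algorithm is a sequence of elementary column operations carrying $(r,s)$ to $(\gcd(r,s),0)$, i.e. a unit $V$ with $AV=\gcd(r,s)\cdot E_{11}$, again covered by Proposition \ref{re}.

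The substance is case (iii), where $R$ need not be B\'ezout, $A$ need not be equivalent to a monomial matrix, and so I must exhibit $U$ explicitly. Given $X$, I would apply the hypothesis to the entries $a,c$ to obtain $q,\alpha,\beta$ with $(a+qs)\alpha+(-c+qr)\beta=1$ and set $U=\left[\begin{smallmatrix} -(a+qs) & -\beta \\ qr-c & -\alpha\end{smallmatrix}\right]$. This choice is forced by the two constraints above: the displayed relation says exactly that $\det U=1$, so $U$ is a unit, while the first column of $U+X$ collapses to $(-qs,\,qr)$, a scalar multiple of $(-s,r)$. Substituting into $1-(x+a)r-(z+c)s$ gives $1-(-qs)r-(qr)s=1$ by commutativity, so $(U+X)A-I_2$ is a unit and $A$ has unit sr1.

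I expect the genuine obstacle to lie entirely in case (iii), in the coupling of the two requirements. Killing the trace term $(x+a)r+(z+c)s$ alone is trivial (take $(x+a,z+c)$ proportional to $(s,-r)$), but such a choice says nothing about whether the resulting first column of $U$ can be completed to an invertible matrix. Hypothesis (iii) is precisely what resolves this: for a suitable multiplier $q$ it forces the first column $(-(a+qs),\,qr-c)$ to generate the unit ideal, which is exactly the input needed to fill in the second column $(-\beta,-\alpha)$ with $\det U=1$. Finally I would remark that B\'ezout domains sit under this case because there one may instead diagonalize $A$ to $\gcd(r,s)\cdot E_{11}$ and invoke Proposition \ref{re}, just as in (i) and (ii).
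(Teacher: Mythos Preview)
Your argument is correct and follows the paper's overall architecture: reduce to the zero-second-row shape, then handle the three cases. Case (iii) is identical to the paper's (up to a harmless sign in the second column of $U$; your choice makes $\det U=1$ transparent), and case (ii) proceeds the same way, by column-reducing via the Euclidean algorithm.

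The one genuine deviation is in case (i). The paper does not reduce to $rE_{11}$; instead it writes down explicit unit unitizers directly for $A=\left[\begin{smallmatrix} r & s \\ 0 & 0\end{smallmatrix}\right]$: when $s\mid r$ it takes $U=\left[\begin{smallmatrix} 1 & 0 \\ -c-(r/s)(a+1) & 1\end{smallmatrix}\right]$, and when $r\mid s$ it takes $U=\left[\begin{smallmatrix} -a+(s/r)(1-c) & 1 \\ -1 & 0\end{smallmatrix}\right]$, in each case forcing $1-r(a+x)-s(c+z)=1$. Your route---shear a column off and invoke Proposition~\ref{re}---is shorter and avoids any computation, at the cost of not exhibiting the unitizer for $A$ itself. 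Both are fine; the paper's version has the small advantage that case (ii) then literally reduces to case (i), whereas you bypass (i) entirely in (ii) by pushing the Euclidean reduction all the way to a monomial matrix.
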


\begin{proof}
By Lemma \ref{trans} (i), it suffices to prove the claim for matrices with
zero second row.

(i) Let $A=\left[ 
\begin{array}{cc}
r & s \\ 
0 & 0%
\end{array}%
\right] $ with $r,s\in R$ and suppose $s$ divides $r$. Since $\det (A)=0$,
replacement in the characterization theorem gives $-\mathrm{Tr}((U+X)A)+1\in
U(R)$ or $1-r(a+x)-s(c+z)\in U(R)$. If $1-r(a+x)-s(c+z)=1$ we have to solve $%
rx+sz=-ra-sc$ with $xt-yz=1$. We can eliminate $z$, multiplying by $s$: then 
$sxt+y(rx+ra+sc)=s$ which has solution $x=t=1$, $y=0$ and $z=-c-\dfrac{r}{s}%
(a+1)$. Hence $U=\left[ 
\begin{array}{cc}
1 & 0 \\ 
-c-\dfrac{r}{s}(a+1) & 1%
\end{array}%
\right] $ is a suitable unit unitizer. If $r$ divides $s$, similarly a
suitable unit unitizer is $U=\left[ 
\begin{array}{cc}
-a+\dfrac{s}{r}(1-c) & 1 \\ 
-1 & 0%
\end{array}%
\right] $.

(ii) Write $r=sq_{1}+r_{1}$, $s=r_{1}q_{2}+r_{2}$, ..., $%
r_{n-2}=r_{n-1}q_{n}+r_{n}\,$, $r_{n-1}=r_{n}q_{n+1}+0$, for the Euclidean
algorithm where $r_{n}=\gcd (r;s)$. Next, notice that $\left[ 
\begin{array}{cc}
r & s \\ 
0 & 0%
\end{array}%
\right] \left[ 
\begin{array}{cc}
0 & 1 \\ 
1 & -q_{1}%
\end{array}%
\right] =\left[ 
\begin{array}{cc}
s & r_{1} \\ 
0 & 0%
\end{array}%
\right] $, $\left[ 
\begin{array}{cc}
s & r_{1} \\ 
0 & 0%
\end{array}%
\right] \left[ 
\begin{array}{cc}
0 & 1 \\ 
1 & -q_{2}%
\end{array}%
\right] =\left[ 
\begin{array}{cc}
r_{1} & r_{2} \\ 
0 & 0%
\end{array}%
\right] $, ...,

$\left[ 
\begin{array}{cc}
r_{n-2} & r_{n-1} \\ 
0 & 0%
\end{array}%
\right] \left[ 
\begin{array}{cc}
0 & 1 \\ 
1 & -q_{n}%
\end{array}%
\right] =\left[ 
\begin{array}{cc}
r_{n-1} & r_{n} \\ 
0 & 0%
\end{array}%
\right] $, whence $\left[ 
\begin{array}{cc}
r & s \\ 
0 & 0%
\end{array}%
\right] $ is equivalent to $\left[ 
\begin{array}{cc}
r_{n-1} & r_{n} \\ 
0 & 0%
\end{array}%
\right] $. Since $r_{n}$ divides $r_{n-1}$, (i) applies.

(iii) For $X=\left[ 
\begin{array}{cc}
a & b \\ 
c & d%
\end{array}%
\right] $, assume $(a+qs)\alpha +(-c+qr)\beta =1$, for some $q\in R$ and
take the (unit) unitizer $U=\left[ 
\begin{array}{cc}
-a-qs & \beta \\ 
-c+qr & -\alpha%
\end{array}%
\right] $. Then $(U+X)\left[ 
\begin{array}{cc}
r & s \\ 
0 & 0%
\end{array}%
\right] -I_{2}=\left[ 
\begin{array}{cc}
-qrs-1 & -qs^{2} \\ 
qr^{2} & qrs-1%
\end{array}%
\right] $ is a unit of determinant $=1$.
\end{proof}

\section{The Goodearl Menal condition}

Recall that $a\in R$ satisfies the GM condition (rings with the GM condition
were also called "\emph{rings with many units}", for obvious reasons) if for
every $x\in R$ there is $u\in U(R)$ such that both $x-u,a-u^{-1}\in U(R)$.

As already mentioned, in any ring $R$, $a-u^{-1}\in U(R)$ is equivalent to $%
ua-1\in U(R)$.

\textbf{Remark}. For every $x\in R$ there is unit $u$ such that $x-u\in U(R)$%
, is equivalent to $R$ being 2-good. If in addition, $U(R)\cdot
N(R)\subseteq N(R)$ (e.g., reduced or commutative rings), then all
nilpotents satisfy GM.

\bigskip

The following simple result will be useful

\begin{lemma}
\label{u}(i) The GM condition is invariant to equivalences.

(ii) If a unit $a\in U(R)$ satisfies GM then $1$ satisfies GM.
\end{lemma}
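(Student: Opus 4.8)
The plan is to treat part (ii) as an immediate corollary of part (i): since $a\in U(R)$, we have $1=a^{-1}a$, so $1$ is obtained from $a$ by left multiplication by the unit $a^{-1}$, i.e.\ $1$ is equivalent to $a$; once (i) is available, (ii) follows at once. Thus the real work is (i), and I would prove it exactly along the lines of Lemma \ref{equiv}: show that the GM condition is preserved under left multiplication by a unit and under right multiplication by a unit, and then note that equivalence $a\mapsto vaw$ is the composite of these two operations.

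For the left case, suppose $a$ satisfies GM and $v\in U(R)$; I want $va$ to satisfy GM. Given an arbitrary $y\in R$, I would feed $x:=yv$ into the GM condition for $a$, obtaining a unit $u$ with both $yv-u\in U(R)$ and $a-u^{-1}\in U(R)$. The correct unitizer for $va$ turns out to be $w:=uv^{-1}$, which is a unit. Indeed $y-w=y-uv^{-1}=(yv-u)v^{-1}$ is a product of units, and, using $w^{-1}=vu^{-1}$, we get $va-w^{-1}=va-vu^{-1}=v(a-u^{-1})$, again a product of units. Hence both $y-w$ and $va-w^{-1}$ are units, so $va$ satisfies GM.

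The right case is symmetric: to show $av$ satisfies GM I would instead set $x:=vy$, obtain $u$ with $vy-u\in U(R)$ and $a-u^{-1}\in U(R)$, and take $w:=v^{-1}u$. Then $y-w=v^{-1}(vy-u)$ and, using $w^{-1}=u^{-1}v$, $av-w^{-1}=(a-u^{-1})v$ are both products of units. Composing the two cases gives invariance under equivalence, proving (i); and applying the left case with $v=a^{-1}$ to the unit $a$ gives that $1=a^{-1}a$ satisfies GM, proving (ii). Note that nowhere is commutativity used, so the argument is valid in an arbitrary ring.

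The main obstacle is purely bookkeeping: getting the one-sided twists right. The GM condition couples two requirements ($x-u$ and $a-u^{-1}$ must simultaneously be units), so when $a$ is multiplied by a unit one must choose the substitution for $w$ (and the matching input $x$ in terms of $y$) so that \emph{both} the $x-u$ part and the $a-u^{-1}$ part factor as products of units. The delicate point is that the correct choices ($w=uv^{-1}$ with $x=yv$ on the left, $w=v^{-1}u$ with $x=vy$ on the right) place the compensating unit on opposite sides in the two factors, and one has to verify that each of the two resulting expressions genuinely collapses to such a product.
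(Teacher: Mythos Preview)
Your proof is correct and follows essentially the same approach as the paper: for (i) you take $x=yv$ and use the unitizer $w=uv^{-1}$ for the left case (exactly as the paper does), then spell out the symmetric right case which the paper merely indicates; for (ii) you apply the left case with $v=a^{-1}$, just as the paper does.
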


\begin{proof}
(i) Let $v\in U(R)$ and suppose $a\in R$ satisfies GM. We show that $va$
also satisfies GM. For an arbitrary $y\in R$ consider $x=yv$. By hypothesis,
there exists $u\in U(R)$ such that $x-u,a-u^{-1}\in U(R)$ and so $yv-u\in
U(R)$ and $a-u^{-1}\in U(R)$. By right multiplication with $v^{-1}$ and left
multiplication with $v$, respectively, we obtain $y-uv^{-1}\in U(R)$ and $%
va-vu^{-1}=va-(uv^{-1})^{-1}\in U(R)$, as desired.

A symmetric proof shows that also $av$ satisfies GM.

(ii) Follows from (i) by left (or right) multiplication with $a^{-1}$.
\end{proof}

\textbf{Remarks}. (i) Notice that $a$ satisfies GM iff $-a$ satisfies GM (by
multiplication with the unit $-1$).

(ii) The GM condition is also invariant to conjugations.

\bigskip

We first prove a characterization of the $2\times 2$ matrices over any
commutative ring which have the GM condition.

\begin{theorem}
A $2\times 2$ matrix $A$ over a commutative ring satisfies the GM condition
iff for every $X$ there is a unit $U$ such that $\det (U)$, $\det (X)+\det
(U)-\mathrm{Tr}(adj(X)U)$ and $\det (U)\det (A)-\mathrm{Tr}(UA)+1$ are units
of $R$.
\end{theorem}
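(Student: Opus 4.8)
The plan is to translate each of the three conditions in the definition of the GM property into a scalar determinant condition over the commutative ring $R$, exactly as was done for unit sr1 in Theorem \ref{cha}. Recall that $A$ satisfies GM iff for every $X\in\mathbb{M}_2(R)$ there is a unit $U\in\mathbb{M}_2(R)$ such that both $X-U$ and $A-U^{-1}$ are units of $\mathbb{M}_2(R)$. Over a commutative ring a matrix is invertible iff its determinant is a unit of $R$, so the entire task reduces to rewriting the three conditions $U\in U(\mathbb{M}_2(R))$, $X-U\in U(\mathbb{M}_2(R))$, and $A-U^{-1}\in U(\mathbb{M}_2(R))$ as the three scalar conditions in the statement.

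The first condition is immediate: $U$ is a unit of $\mathbb{M}_2(R)$ iff $\det(U)$ is a unit of $R$. For the second, I would apply the identity $\det(X-U)=\det(X)+\det(U)-\mathrm{Tr}(adj(X)U)$ valid for any two $2\times 2$ matrices; this is the bilinear polarization of the determinant, which for $2\times 2$ matrices reads $\det(B+C)=\det(B)+\det(C)+\mathrm{Tr}(adj(B)C)$, and I would verify it by a direct entry computation (or cite it as the $2\times2$ case of the standard multilinear expansion). Substituting $B=X$, $C=-U$ and using $adj(-U)=\cdots$, equivalently $\mathrm{Tr}(adj(X)(-U))=-\mathrm{Tr}(adj(X)U)$, gives $\det(X-U)=\det(X)+\det(U)-\mathrm{Tr}(adj(X)U)$, so $X-U$ is a unit iff this expression is a unit of $R$.

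For the third condition, the clean move is to observe, as noted just before the theorem, that $A-U^{-1}$ is a unit iff $UA-I_2$ is a unit, since $A-U^{-1}=U^{-1}(UA-I_2)$ and $\det(U)$ is already a unit. Then I would reuse the $2\times 2$ formula $\det(C-I_2)=\det(C)-\mathrm{Tr}(C)+1$ from the proof of Theorem \ref{cha}, applied to $C=UA$, yielding $\det(UA-I_2)=\det(U)\det(A)-\mathrm{Tr}(UA)+1$. Hence $A-U^{-1}$ is a unit iff $\det(U)\det(A)-\mathrm{Tr}(UA)+1$ is a unit of $R$. Collecting the three equivalences, and noting that all must hold simultaneously for the \emph{same} unit $U$, gives precisely the stated characterization.

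The only genuine subtlety, and the step I would be most careful about, is the bilinearity identity $\det(X-U)=\det(X)+\det(U)-\mathrm{Tr}(adj(X)U)$, together with getting the sign right under the substitution $C=-U$. Everything else is a direct assembly of facts already established in the excerpt: the determinant-unit criterion over commutative rings, the equivalence $A-U^{-1}\in U(R)\Leftrightarrow UA-I_2\in U(R)$, and the formula for $\det(C-I_2)$. I would therefore write the proof as three short paragraphs, one per condition, opening with the reduction of matrix invertibility to a scalar determinant being a unit, and closing by remarking that the three conditions are required jointly for a single $U$, matching the logical structure of the GM definition.
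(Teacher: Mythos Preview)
Your proposal is correct and is exactly the intended argument: the paper's own proof says only ``Analogous with the proof of Theorem \ref{cha},'' and what you have written is precisely that analogy made explicit, reducing each of the three invertibility conditions to a scalar determinant condition via the identities $\det(X-U)=\det(X)+\det(U)-\mathrm{Tr}(adj(X)U)$ and $\det(C-I_2)=\det(C)-\mathrm{Tr}(C)+1$. Your care with the sign in the polarization identity (using $\det(-U)=\det(U)$ in dimension $2$) is the only point requiring attention, and you handle it correctly.
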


\begin{proof}
Analogous with the proof of Theorem \ref{cha}.
\end{proof}

For the special case of integral matrices we obtain

\begin{corollary}
\label{GM1}An integral matrix $A$ satisfies the GM condition iff for every $%
X $ there is a unit $U$ such that

(a) $\mathrm{Tr}(UA)\in \{-1,1,3\}$, $\det (X)-\mathrm{Tr}(adj(X)U)\in
\{-2,0,2\}$ and $A$ is a unit, or else

(b) $\mathrm{Tr}(UA)\in \{0,2\}$, $\det (X)-\mathrm{Tr}(adj(X)U)\in
\{-2,0,2\}$ and $\det (A)=0$.
\end{corollary}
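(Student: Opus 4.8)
The plan is to specialize the preceding $2\times 2$ characterization theorem to $R=\mathbb{Z}$. For integral matrices, $U(\mathbb{Z})=\{-1,1\}$, so each of the three ring-element conditions becomes a membership-in-$\{-1,1\}$ condition, which I would convert into explicit numerical constraints. First I would use that $\det(U)\in U(\mathbb{Z})=\{-1,1\}$ is automatic for any $U\in GL_2(\mathbb{Z})$, so that clause imposes no restriction and can be dropped. The remaining two conditions must each yield a unit of $\mathbb{Z}$, i.e.\ must equal $\pm 1$.

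Next I would unwind the third condition, $\det(U)\det(A)-\mathrm{Tr}(UA)+1\in\{-1,1\}$. The natural split is on whether $A$ is invertible over $\mathbb{Z}$ (i.e.\ $\det(A)=\pm 1$) or singular ($\det(A)=0$; recall that for integral matrices with the GM condition, Theorem~\ref{inte} and Lemma~\ref{doi-g} force $\det(A)\in\{-1,0,1\}$, so these are the only cases). If $\det(A)=0$ the expression reduces to $1-\mathrm{Tr}(UA)$, and demanding this be $\pm 1$ gives $\mathrm{Tr}(UA)\in\{0,2\}$, which is case~(b). If $A$ is a unit, then $\det(U)\det(A)=\pm 1$, and $\pm 1-\mathrm{Tr}(UA)+1\in\{-1,1\}$ unpacks to $\mathrm{Tr}(UA)\in\{-1,1,3\}$ after collecting the possible values of $\det(U)\det(A)\in\{-1,1\}$; this is case~(a). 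The second condition, $\det(X)+\det(U)-\mathrm{Tr}(adj(X)U)\in\{-1,1\}$, I would handle uniformly: since $\det(U)\in\{-1,1\}$, writing $\det(X)-\mathrm{Tr}(adj(X)U)=\pm 1\mp\det(U)$ and ranging over signs gives exactly $\det(X)-\mathrm{Tr}(adj(X)U)\in\{-2,0,2\}$, which appears identically in both (a) and (b).

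The only genuine bookkeeping is verifying that the case-(a) constraint $\det(A)=\pm 1$ can be stated simply as ``$A$ is a unit'' and that I have enumerated the sign combinations correctly; in particular I must confirm that when $\det(A)=\pm 1$ the value $\det(U)\det(A)$ ranges over all of $\{-1,1\}$ as $U$ varies (true, since $U=I_2$ and $U=\mathrm{diag}(1,-1)$ realize both signs), so no spurious restriction is introduced by fixing the sign of $\det(A)$. I expect the main obstacle to be purely organizational: keeping the quantifier structure straight, since the characterization is ``for every $X$ there exists $U$,'' so each of the two surviving conditions must be simultaneously satisfiable by a \emph{single} common unit $U$ for each given $X$. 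The proof therefore amounts to substituting $\{-1,1\}$ for $U(\mathbb{Z})$ throughout the theorem, discarding the vacuous $\det(U)$ clause, and reading off the two determinant-based cases, and I would present it as a one-line reduction to the previous theorem followed by the elementary enumeration of which integer values make each expression a unit.
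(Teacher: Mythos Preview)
Your proposal is correct and follows essentially the same route as the paper: both specialize the preceding $2\times 2$ characterization to $R=\mathbb{Z}$, observe that $\det(U)\in\{\pm 1\}$ is automatic, and then read off the numerical constraints by enumerating sign combinations, with the paper splitting first on $\det(U)=\pm 1$ and you splitting first on $\det(A)$---a purely organizational difference. One small slip: the implication you need to restrict to $\det(A)\in\{-1,0,1\}$ is ``GM $\Rightarrow$ usr1'' (the first lemma of Section~2), not Lemma~\ref{doi-g}.
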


\begin{proof}
For $X=\left[ 
\begin{array}{cc}
a & b \\ 
c & d%
\end{array}%
\right] $, $U=\left[ 
\begin{array}{cc}
x & y \\ 
z & t%
\end{array}%
\right] $ and any $2\times 2$ matrix $A$, both $X-U$ and $UA-I_{2}$ are
units, iff%
\begin{eqnarray*}
\det (U) &\in &\{\pm 1\}, \\
\det (X)+\det (U)-\mathrm{Tr}(adj(X)U) &\in &\{\pm 1\}, \\
\det (U)\det (A)-\mathrm{Tr}(UA)+1 &\in &\{\pm 1\}.
\end{eqnarray*}

\textbf{Case 1}. Let $\det (U)=1$. Then $\det (X)-\mathrm{Tr}(adj(X)U)\in
\{-2,0\}$ and $\det (A)-\mathrm{Tr}(UA)+1\in \{\pm 1\}$. The first condition
is independent from $A$.

(a) Since $A$ is supposed to be a unit, $1-\mathrm{Tr}(UA)\pm 1\in \{\pm 1\}$%
, that is, $\mathrm{Tr}(UA)\in \{-1,1,3\}$.

(b) If $\det (A)=0$, we get $\mathrm{Tr}(UA)\in \{0,2\}$

\textbf{Case 2}. Let $\det (U)=-1$. Then $\det (X)-\mathrm{Tr}(adj(X)U)\in
\{0,2\}$ and $-\det (A)-\mathrm{Tr}(UA)+1\in \{\pm 1\}$.

(a) Since $A$ is supposed to be a unit, $1-\mathrm{Tr}(UA)\mp 1\in \{\pm 1\}$%
, that is, the same $\mathrm{Tr}(UA)\in \{-1,1,3\}$.

(b) If $\det (A)=0$, again we get $\mathrm{Tr}(UA)\in \{0,2\}$.
\end{proof}

It is easy to see that $0$ satisfies the GM condition in a ring $R$ iff $R$
is 2-good. Therefore, $0_{2}$ satisfies GM in $\mathbb{M}_{2}(\mathbb{Z})$.

Next, with the expression mentioned in starting this section, we show that $%
\mathbb{M}_{2}(\mathbb{Z})$ has "few" units. More precisely, our fourth and
last main result follows.

\begin{theorem}
\label{zero}Nonzero matrices of $\mathbb{M}_{2}(\mathbb{Z})$ do not satisfy
GM.
\end{theorem}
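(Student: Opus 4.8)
The plan is to assume a nonzero $A\in\mathbb{M}_2(\mathbb{Z})$ satisfies GM and derive a contradiction, by first cutting the list of matrices that must be examined down to a handful of normal forms and then defeating each survivor with an explicit witness $X$. First I would collect the necessary conditions already proved: since GM implies unit sr1 (the first lemma of Section 2) and, by Theorem \ref{inte}, an integral $2\times2$ matrix has unit sr1 iff $\det\in\{-1,0,1\}$, any $A$ satisfying GM has $\det A\in\{-1,0,1\}$. Because GM is invariant under equivalence (Lemma \ref{u}(i)) and $\mathbb{Z}$ is an elementary divisor ring, I may replace $A$ by its Smith normal form: if $\det A=\pm1$ then $A$ is equivalent to $I_2$, and if $\det A=0$ with $A\neq0$ then $A$ is equivalent to $dE_{11}$ for some integer $d\ge1$. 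Hence it suffices to show that $I_2$ and every $dE_{11}$ fail GM.

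The engine is the characterization in Corollary \ref{GM1}: such an $A$ satisfies GM iff for every $X$ there is a unit $U$ with $\det(X-U)=\pm1$ together with the trace condition coming from $UA-I_2$ being a unit, namely $\mathrm{Tr}(U)\in\{-1,1,3\}$ when $A=I_2$, and $d\,U_{11}\in\{0,2\}$ when $A=dE_{11}$. My main tool is the single witness $X_0=3E_{12}$, for which a one-line determinant computation gives $\det(X_0-U)=\det U+3\,U_{21}$; since $\det U=\pm1$, the requirement $\det(X_0-U)=\pm1$ forces $U_{21}=0$, so every admissible $U$ is upper triangular with diagonal entries in $\{\pm1\}$. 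For $A=I_2$ this makes $\mathrm{Tr}(U)\in\{-2,0,2\}$, disjoint from $\{-1,1,3\}$; for $A=dE_{11}$ it makes $d\,U_{11}=\pm d$, which lies in $\{0,2\}$ only when $d=2$. Thus $X_0$ already eliminates $I_2$ and every $dE_{11}$ with $d\neq2$.

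The residual, and genuinely hardest, case is $d=2$, i.e. $A\sim 2E_{11}$, where the admissible units with $U_{11}=1$ form a two-parameter family $U=\bigl[\begin{smallmatrix}1&y\\z&t\end{smallmatrix}\bigr]$ with $t=yz\pm1$, so no sparse witness can confine $\det(X-U)$ away from $\pm1$. Here I would instead use a witness whose off-diagonal entries share a common factor, for instance $X_1=\bigl[\begin{smallmatrix}0&5\\5&1\end{smallmatrix}\bigr]$. For the $U_{11}=0$ family (where $U_{12},U_{21}\in\{\pm1\}$) the determinant is $-(5-y)(5-z)$ with $y,z\in\{\pm1\}$, of absolute value at least $16$; for the $U_{11}=1$ family, substituting $t=yz\pm1$ collapses the determinant to $5(y+z)-26\pm1$, which is either a multiple of $5$ or $\equiv3\pmod5$, hence never $\pm1$. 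I expect this simultaneous blocking of both families to be the crux of the whole argument: the $U_{11}=1$ family is exactly where a naive modular obstruction is defeated by the sign freedom in $t=yz\pm1$, and choosing the off-diagonal entries to be $5$ (with the lower-right entry $1$) is what pushes every attainable determinant into a residue class missing $\{-1,1\}$. Once $X_1$ is shown to admit no suitable $U$, the case $d=2$ is closed, every nonzero normal form fails GM, and the theorem follows, leaving the zero matrix as the only matrix of $\mathbb{M}_2(\mathbb{Z})$ satisfying the GM condition.
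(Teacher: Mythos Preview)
Your argument is correct, and it is genuinely cleaner than the paper's. Both proofs share the same opening reduction (GM $\Rightarrow$ unit sr1 $\Rightarrow\det A\in\{-1,0,1\}$, then pass to Smith normal form using invariance of GM under equivalence), so the work comes down to defeating $I_2$ and $dE_{11}$. Here the two approaches diverge sharply. The paper treats $I_2$ by splitting on $\mathrm{Tr}(U)\in\{-1,1,3\}$ and producing a separate witness $X$ for each of the three trace values; it then handles $nE_{11}$ for $n\ge2$ with one witness, and spends the longest stretch of the proof on $E_{11}$, running through several Diophantine subcases with four further witnesses. In total the paper exhibits seven ad hoc matrices $X$.

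Your single witness $X_0=3E_{12}$ does almost all of this at once: the identity $\det(X_0-U)=\det U+3U_{21}$ forces $U_{21}=0$, so any admissible $U$ is upper triangular with diagonal entries $\pm1$. That instantly kills $I_2$ (since then $\mathrm{Tr}(U)\in\{-2,0,2\}$) and every $dE_{11}$ with $d\neq2$ (since $dU_{11}=\pm d\notin\{0,2\}$), including the case $d=1$ that costs the paper the most effort. Your remaining case $d=2$ is then dispatched with one further witness $X_1=\bigl[\begin{smallmatrix}0&5\\5&1\end{smallmatrix}\bigr]$; the substitution $t=yz\pm1$ collapsing $\det(X_1-U)$ to $5(y+z)-26\pm1$ is exactly the kind of modular obstruction the paper is reaching for, but organized more efficiently. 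What your approach buys is economy and a uniform mechanism (force $U$ upper triangular, read off trace constraints); what the paper's approach buys is that each individual step is a routine linear Diophantine check, at the cost of a longer case analysis.
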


\begin{proof}
As noticed in Section 2, elements which satisfy the GM condition, have
(unit) sr1. Hence, with respect to integral $2\times 2$ matrices (see also
Theorem \ref{inte}), these are units or have zero determinant. Therefore we
split the proof in two parts.

We first show that \emph{units do not satisfy GM in} $\mathbb{M}_{2}(\mathbb{%
Z})$. Using Lemma \ref{u}, it suffices to show that $I_{2}$ does not satisfy
GM in $\mathbb{M}_{2}(\mathbb{Z})$.

According to Corollary \ref{GM1}, $I_{2}$ has GM iff for every $X$ there is
a unit $U$ such that $\mathrm{Tr}(U)\in \{-1,1,3\}$ and $\det (X)-\mathrm{Tr}%
(adj(X)U)\in \{-2,0,2\}$.

Here $adj(X)=\left[ 
\begin{array}{cc}
d & -b \\ 
-c & a%
\end{array}%
\right] $, so the second condition becomes $ad-bc-dx+bz+cy-at\in \{-2,0,2\}$.

Hence $x+t\in \{-1,1,3\}$, $xt-yz=1$ and $dx-cy-bz+at=ad-bc+k$ with $k\in
\{-2,0,2\}$, which are Diophantine linear equations (for which the
solvability condition is well-known).

\textbf{Case 1}. $x+t=-1$ or $t=-x-1$ gives 
\begin{equation*}
(d-a)x-cy-bz=a(d+1)-bc+k.
\end{equation*}
If we take $X=\left[ 
\begin{array}{cc}
2 & 5 \\ 
5 & 7%
\end{array}%
\right] $ then $\gcd (d-a;c;b)=5$ and $a(d+1)-bc=16-25=-9$. Since $-9+k$ is
not divisible by $5$, there are no integer solutions.

\textbf{Case 2}. $x+t=1$ or $t=1-x$ gives 
\begin{equation*}
(d-a)x-cy-bz=a(d-1)-bc+k.
\end{equation*}
If we take $X=\left[ 
\begin{array}{cc}
3 & 5 \\ 
5 & 8%
\end{array}%
\right] $ then $\gcd (d-a;c;b)=5$ and $a(d-1)-bc=21-25=-4$. Since $-4+k$ is
not divisible by $5$, there are no integer solutions.

\textbf{Case 3}. $x+t=3$ or $t=3-x$ gives 
\begin{equation*}
(d-a)x-cy-bz=a(d-3)-bc+k.
\end{equation*}
If we take $X=\left[ 
\begin{array}{cc}
4 & 5 \\ 
5 & 9%
\end{array}%
\right] $ then $\gcd (d-a;c;b)=5$ and $a(d-3)-bc=24-25=-1$. Since $-1+k$ is
not divisible by $5$, there are no integer solutions.

Secondly, we show that the only \emph{zero determinant matrix} which
satisfies GM in $\mathbb{M}_{2}(\mathbb{Z})$ is the zero matrix.

Since $\mathbb{Z}$ is an elementary divisor ring, every matrix over $\mathbb{%
Z}$ is equivalent to a diagonal matrix, and using Lemma \ref{u}, it suffices
to prove our claim for diagonal matrices of zero determinant. Excepting the
zero matrix and (if necessary) using Lemma \ref{trans} (i), it suffices to
check this for $nE_{11}$. Since $nE_{11}$ satisfies GM iff $-nE_{11}$
satisfies GM, we can assume $n$ a positive integer.

For multiples $A=nE_{11}$ with $n\geq 2$, take $X=\left[ 
\begin{array}{cc}
0 & 4 \\ 
4 & 3%
\end{array}%
\right] $ and $U=\left[ 
\begin{array}{cc}
x & y \\ 
z & t%
\end{array}%
\right] $. Then $\mathrm{Tr}(UA)=nx\in \{0,2\}$, holds only for $x=0$ if $%
n\geq 3$ and for $x\in \{0,1\}$ if $n=2$. Further, $\det (X)-\mathrm{Tr}%
(adj(X)U)=-16-3x+4y+4z$. Then if $x=0$, since $xt-yz=\det (U)\in \{\pm 1\}$, 
$yz\in \{\pm 1\}$ follows and so $y,z\in \{\pm 1\}$. The condition becomes $%
4(-4+y+z)\in \{-2,0,2\}$, impossible for $y,z\in \{\pm 1\}$. If $x=1$, then $%
-19+4(y+z-t)\notin \{-2,0,2\}$.

Finally, we show that $A=E_{11}$, does not satisfy GM. With an arbitrary $X=%
\left[ 
\begin{array}{cc}
a & b \\ 
c & d%
\end{array}%
\right] $ and $U=\left[ 
\begin{array}{cc}
x & y \\ 
z & t%
\end{array}%
\right] $, the conditions $\det (X)-\mathrm{Tr}(adj(X)U)\in \{-2,0,2\}$ and $%
\mathrm{Tr}(UA)\in \{0,2\}$ become $x\in \{0,2\}$ and $ad-bc-dx+cy+bz-at\in
\{-2,0,2\}$.

(i) For $x=0$ we have $ad-bc+cy+bz-at\in \{-2,0,2\}$ which are Diophantine
equations%
\begin{equation*}
cy+bz-at=-ad+bc+k
\end{equation*}%
where $k\in \{-2,0,2\}$.

If $\delta =\gcd (c;b;a)$ and $\delta \geq 3$, the equations are solvable
only if $k=0$. In this case, $\delta $ divides also $-ad+bc+0$, so this
equation has always solutions. However, we need a solution such that $U$ is
a unit, i.e., $xt-yz\in \{\pm 1\}$.

Take $X=\left[ 
\begin{array}{cc}
15 & 3 \\ 
3 & 0%
\end{array}%
\right] $ and divide $a$, $b$, $c$ by $\delta =3$. Then we obtain the
Diophantine equation $y+z-5t=1$. Since as already mentioned, $y,z\in \{\pm
1\}$, we have $y+z\in \{-2,0,2\}$, so the equation has no solutions.

If $\delta =2$, the equations are solvable for any $k\in \{-2,0,2\}$.
Dividing by $2$, we can assume coprime $a$, $b$, $c$ and $k\in \{-1,0,1\dot{%
\}}$. Again $y,z\in \{\pm 1\}$, whence $2y+5z\in \{-7,-3,3,7\}$. Starting
with $X=\left[ 
\begin{array}{cc}
22 & 10 \\ 
4 & 0%
\end{array}%
\right] $, we get $2y+5z-11t=-11d+10+k$, with $k\in \{-1,0,1\dot{\}}$. The
LHS is congruent (mod $11$) to $3$ or $4$ or $7$ or $8$ but the RHS is
congruent to $0$ or $9$ or $10$. So the equation has no solutions.

If $\delta =1$, the equations are solvable for any $k\in \{-2,0,2\}$. Again $%
y,z\in \{\pm 1\}$ whence $y+2z$ is odd. Starting with $X=\left[ 
\begin{array}{cc}
8 & 2 \\ 
1 & 0%
\end{array}%
\right] $, we get $y+2z-8t=-8d+2+k$. Since LHS is odd and RHS is even, the
equation has no solutions.

(ii) For $x=2$ we have 
\begin{equation*}
cy+bz-at=(2-a)d+bc+k
\end{equation*}%
with $k\in \{-2,0,2\}$. We take $X=\left[ 
\begin{array}{cc}
5 & 5 \\ 
5 & 2%
\end{array}%
\right] $ and so $\gcd (c;b;a)=5$ but $(2-a)d+bc=-6+25=19$ and $19+k$ is not
divisible by $5$. This completes the proof.
\end{proof}

\section{Open questions}

Give examples of:

\textbf{1)} an element of $J(R)-N(R)$ which has not unit sr1.

Hint: \emph{nilpotents in the Jacobson radical have unit sr1}. Indeed, since 
$a\in J(R)$ iff $1-xa$ is left invertible for any $x\in R$, for the (unit)
sr1 property, we can choose the (unit) unitizer $y=-(1-a)(1-xa)^{-1}$, where 
$1-a\in U(R)$ if $a\in N(R)$.

\bigskip

\textbf{2)} two unit sr1 elements whose product has not unit sr1.

Hint: according to Theorem \ref{inte}, and T.Y. Lam's proof for the
multiplicative closure of sr1 elements (see \cite{L1}), such an example
cannot be given in $\mathbb{M}_{2}(\mathbb{Z})$.

\bigskip

\textbf{3)} Which idempotents have unit sr1 ? If $R$ is 2-good, have all
idempotents unit sr1 ?

Idempotents are unit-regular and unit-regular elements have sr1. As already
noticed, the idempotent $1$ has not unit sr1 in $\mathbb{Z}$.

For matrix rings these questions were addressed in Section 3.

\bigskip

\begin{acknowledgement}
Thanks are due to Horia F. Pop for computer aid.
\end{acknowledgement}

\bigskip


\begin{thebibliography}{99}
\bibitem{ca-po} G. C\u{a}lug\u{a}reanu, H. F. Pop \textsl{On stable range
one matrices}. Submitted (2020); https://arxiv.org/abs/2012.13909

\bibitem{chen} H. Chen \textsl{Rings related to stable range conditions}.
Series in Algebra: Volume 11, World Scientific 2011.

\bibitem{goo} K. R. Goodearl, P. Menal \textsl{Stable range one for rings
with many units}. J. of Pure and Applied Algebra \textbf{54} (1988), 261-287.

\bibitem{hen} M. Henriksen \textsl{Some Remarks on Elementary Divisor Rings
II}. Michigan Math. J. \textbf{3} (2) (1955), 159-163.

\bibitem{kap} I. Kaplansky \textsl{Elementary divisors and modules}. Trans.
Amer. Math. Soc. \textbf{66} (1949), 464-491.

\bibitem{L1} T. Y. Lam \textsl{Ring elements of stable range one}. Preprint,
2018.

\bibitem{son} G. Song and X. Guo, \textsl{Diagonability of idempotent
matrices over noncommutative rings}, \textit{\ Linear Alg. Appl.} \textbf{297%
} (1999), 1-7.

\bibitem{steg} A. Steger, \textsl{Diagonability of idempotent matrices},
Pacific J. of Math. \textbf{19} (3) (1966), 535-542.

\bibitem{vam} P. V\'{a}mos \textsl{2-good rings}. Quarterly J. of Math., 
\textbf{56} (3) (2005), 417-430.

\bibitem{zab} B. Zabavsky \textsl{Diagonal reduction of matrices over rings}%
. Mathematical Studies, Monograph Series, XVI, VNTL Publishers, (2012).
\end{thebibliography}
\end{document}